\documentclass[preprint,3p]{elsarticle}

\usepackage{hyperref}
\usepackage{amsmath}
\usepackage{amsfonts,amssymb}
\usepackage{graphicx}
\usepackage{bm}
\graphicspath{{./Figures/}}
\usepackage{color}
\usepackage{ulem}
\usepackage{multirow}
\usepackage{multicol}
\usepackage{algorithm}
\usepackage{algpseudocode}
\usepackage{algorithmicx}
\usepackage{hyperref}
\usepackage{upquote}
\usepackage{cprotect}

\usepackage[showonlyrefs]{mathtools}
\usepackage[dvipsnames]{xcolor}

\usepackage[final,defaultcolor=red]{changes}

\normalem
\DeclareMathAlphabet{\mathbf}{OT1}{cmr}{bx}{it}
\newcommand{\va}{{\mathbf a}}
\newcommand{\vb}{{\mathbf b}}

\newcommand{\ve}{{\mathbf e}}

\renewcommand{\d}{\,\mathrm{d}}
\newcommand{\dmu}{\d\mu(t)}

\newcommand{\R}{\mathbb{R}}

\newcommand{\Rnn}{\mathbb{R}^{n \times n}}
\newcommand{\C}{\mathbb{C}}
\newcommand{\calD}{\mathcal{D}}
\newcommand{\calO}{\mathcal{O}}

\newcommand{\Cnn}{\mathbb{C}^{n \times n}}

\newcommand{\lmax}{\lambda_{\textnormal{max}}}
\newcommand{\lmin}{\lambda_{\textnormal{min}}}

\newcommand{\cond}{\textnormal{cond}}

\newcommand{\condabs}{\cond_{\textnormal{abs}}}
\newcommand{\condtwoabs}{\cond_{\textnormal{abs}}^{[2]}}

\DeclareMathOperator{\spec}{spec}

\newtheorem{theorem}{Theorem}
\newtheorem{lemma}[theorem]{Lemma}
\newtheorem{proposition}[theorem]{Proposition}
\newtheorem{corollary}[theorem]{Corollary}
\newproof{proof}{Proof}
\newtheorem{remarksimple}[theorem]{Remark}
\let\oldremarksimple\remarksimple
\renewcommand{\remarksimple}{\oldremarksimple\normalfont}
\newenvironment{remark}{\begin{remarksimple}}{\hfill$\diamond$\end{remarksimple}}
\newtheorem{experimentsimple}[theorem]{Experiment}
\let\oldexperimentsimple\experimentsimple
\renewcommand{\experimentsimple}{\oldexperimentsimple\normalfont}
\newenvironment{experiment}{\begin{experimentsimple}}{\hfill$\diamond$\end{experimentsimple}}

\newtheorem{examplesimple}[theorem]{Example}
\let\oldexamplesimple\examplesimple
\renewcommand{\examplesimple}{\oldexamplesimple\normalfont}

\def\Fd{Fr\'{e}chet derivative}

\usepackage{pgfplots}
\usepgfplotslibrary{groupplots}
\usetikzlibrary{external}

\pgfkeys{/pgf/images/include external/.code={\includegraphics{#1}}}

\usetikzlibrary{decorations.markings}
\makeatletter
\tikzset{
  nomorepostactions/.code={\let\tikz@postactions=\pgfutil@empty},
  mymark/.style 2 args={decoration={markings,
    mark= between positions 0 and 1 step (1/9)*\pgfdecoratedpathlength with{%
        \tikzset{#2,every mark}\tikz@options
        \pgfuseplotmark{#1}%
      },  
    },
    postaction={decorate},
    /pgfplots/legend image post style={
        mark=#1,mark options={#2},every path/.append style={nomorepostactions}
    },
  },
}
\makeatother

%
\def\addlegendimage{\csname pgfplots@addlegendimage\endcsname}

\definecolor{color_marcel1}{named}{Cerulean}
\definecolor{color_marcel2}{named}{BurntOrange}
\definecolor{color_marcel3}{named}{OliveGreen}
\definecolor{color_marcel4}{named}{Orchid}
\pgfplotscreateplotcyclelist{list_std}{%
color_marcel1,line width=1.25pt, mark=x, solid\\
color_marcel2,line width=1.25pt, mark=o, solid\\
color_marcel3,line width=1.25pt, mark=diamond, solid\\
color_marcel4,line width=1.25pt, mark=square, solid\\
}

\pgfplotscreateplotcyclelist{list_mod3}{%
color_marcel2,line width=1.25pt, mark=o, solid\\
color_marcel3,line width=1.25pt, mark=diamond, solid\\
color_marcel4,line width=1.25pt, mark=square, solid\\
}

\pgfplotscreateplotcyclelist{list_std3}{%
color_marcel1,line width=1.25pt, mark=x, solid\\
color_marcel2,line width=1.25pt, mark=o, solid\\
color_marcel3,line width=1.25pt, mark=diamond, solid\\
}

\pgfplotscreateplotcyclelist{list_std1}{%
color_marcel1,line width=1.25pt, mark=x, solid\\
}

\pgfplotsset{compat=1.17}

\journal{}

\bibliographystyle{elsarticle-num}

\begin{document}

\begin{frontmatter}

\title{Integral representations for higher-order Fr\'echet derivatives of matrix functions: Quadrature algorithms and new results on the level-2 condition number}

\author{Marcel Schweitzer}
\address{School of Mathematics and Natural Sciences, Bergische Universit\"at Wuppertal, 42097 Wuppertal, Germany}

\ead{marcel@uni-wuppertal.de}

\begin{abstract}
We propose an integral representation for the higher-order \Fd\ of analytic matrix functions $f(A)$ which unifies known results for the first-order \Fd\ of general analytic matrix functions and for higher-order \Fd{}s of $A^{-1}$. We highlight two applications of this integral representation: On the one hand, it allows to find the exact value of the level-2 condition number (i.e., the condition number of the condition number) of $f(A)$ for a large class of functions $f$ when $A$ is Hermitian. On the other hand, it also allows to use numerical quadrature methods to approximate higher-order \Fd{}s. We demonstrate that in certain situations---in particular when the derivative order $k$ is moderate and the direction terms in the derivative have low-rank structure---the resulting algorithm can outperform established methods from the literature by a large margin.
\end{abstract}

\begin{keyword}
matrix function, Fr\'echet derivative, Cauchy integral formula, Stieltjes function, condition number, numerical quadrature
\MSC[2020] 65F35, 65F60, 15A16, 65D30
\end{keyword}

\end{frontmatter}


\section{Introduction}

Matrix functions
$f \colon \mathbb{C}^{n \times n} \rightarrow \mathbb{C}^{n \times n}$
play an important role in many areas of applied mathematics. Among many other applications, the matrix exponential, $f(A) = e^A$, arises in network analysis~\cite{EstradaHigham2010} and exponential integrators~\cite{HochbruckLubich1997,HochbruckOstermann2010, HochbruckLubichSelhofer1998}, while inverse fractional powers occur in lattice quantum chromodynamics~\cite{VanDenEshofFrommerLippertSchillingVanDerVorst2002,Neuberger1998} and statistical sampling~\cite{IlicTurnerSimpson2010}.

The \Fd\ of a matrix function is defined as the unique operator
$L_f(A, \cdot)\colon \mathbb{C}^{n\times n} \rightarrow \mathbb{C}^{n\times n}$
that is linear in its second argument and,
for any matrix $E \in \mathbb{C}^{n \times n}$,
satisfies
\begin{equation}
  \label{eq:fd}
  f(A + E) - f(A) - L_f(A,E) = o(\|{E}\|),
\end{equation}
where $\|\cdot\|$ denotes any matrix norm and the remainder term $o(\|E\|)$, when divided by $\|E\|$, tends to zero as $\|E\| \rightarrow 0$. Based on~\eqref{eq:fd}, the $k$th \Fd\ of $f$ at $A$ can be recursively defined as the unique multilinear function $L_f^{(k)}(A, \cdot, \dots, \cdot)$ of the matrices $E_i, i = 1,\dots,k$ that satisfies
\begin{equation*}
L_f^{(k-1)}(A+E_k, E_1,\dots,E_{k-1}) - L_f^{(k-1)}(A, E_1, \dots, E_{k-1}) - L_f^{(k)}(A, E_1, \dots, E_k) = o(\|E_k\|), \label{eq:kth_order_frechet}
\end{equation*}
where $L_f^{(1)}(A,E) := L_f(A, E)$ is the usual (first-order) \Fd\ defined in~\eqref{eq:fd}; see~\cite{HighamRelton2014}.

An important application of the \Fd\ is calculating the condition number of computing $f(A)$. Precisely, denoting by $\cond_{\text{abs}}(f, A)$ the absolute condition number of computing $f(A)$, we have
\begin{equation}\nonumber
  \label{eq:fd_condition}
  \mathrm{cond}_{\text{abs}}(f, A) =
  \lim_{\varepsilon \rightarrow 0} \sup_{\|E\| \leq \varepsilon}
  \frac{\|f(A+E)-f(A)\|}{\varepsilon} = \max_{\|E\|=1}\|L_f(A,E)\|;
\end{equation}
see~\cite[Chapter~3]{Higham2008}. In turn, the second-order \Fd\ can be used to compute the \emph{level-two condition number} of a matrix function, i.e., the condition number of the condition number. In particular, the absolute level-two condition number is given by
$$\cond_{\text{abs}}^{(2)}(f, A) := \lim\limits_{\varepsilon \rightarrow 0}\sup\limits_{\|Z\|\leq \varepsilon} \frac{|\cond_{\text{abs}}(f, A+Z) - \cond_{\text{abs}}(f, A)|}{\varepsilon}.$$
In~\cite{HighamRelton2014}, Higham and Relton relate the level-two condition number (with respect to the Frobenius matrix norm) to the second order Fr\'echet derivative, using the relation
\begin{equation}\label{eq:lvl2_kronecker}
\cond_{\text{abs}}^{(2)}(f, A) \leq \max\limits_{\|E_2\|_F=1}\max\limits_{\|E_1\|_F=1} \|L_f^{(2)}(A, E_1, E_2)\|_F = \|K_f^{(2)}(A)\|_2,
\end{equation}
where $\|\cdot\|_F$ and $\|\cdot\|_2$ denote the Frobenius and spectral matrix norm, respectively, and $K_f^{(2)}(A)$ is the \emph{Kronecker matrix}, a representation of the second-order \Fd\ as an $n^4 \times n^2$ matrix; see~\cite[Section~4 \& 5]{HighamRelton2014}. In addition to this application, higher-order \Fd{}s also have uses in the solution of nonlinear equations on Banach spaces; see~\cite{AmatBusquierGutierrez2003}.

The remainder of this paper is organized as follows. In Section~\ref{sec:integral_representation}, we derive an integral representation for the higher-order \Fd\ of analytic functions. Section~\ref{sec:lvl2_condition_number} then deals with finding the level-2 condition number of $f(A)$: We first recap some known formulas and bounds from the literature and then discuss how the integral representation derived in Section~\ref{sec:integral_representation} can be used to extend and improve these results. Section~\ref{sec:algorithms} is devoted to computational methods for the higher-order \Fd: We review some established algorithms for this task and then propose to use our new integral representation as the basis of a quadrature method and assess the computational complexity of the resulting algorithm. Numerical experiments are reported in Section~\ref{sec:numerical_experiments}. Some concluding remarks and directions for future research are given in Section~\ref{sec:concluding_remarks}.

\section{An integral representation for the higher-order \Fd}\label{sec:integral_representation}
In this section, we derive new integral representations for the higher-order \Fd\ which form the basis of the new results and algorithms that are covered in Section~\ref{sec:lvl2_condition_number} and~\ref{sec:algorithms}. We begin by considering analytic functions represented by the Cauchy integral formula and then comment on several related integral representations.

\subsection{Analytic functions represented by the Cauchy integral formula}\label{subsec:cauchy_integral_formula}
Assume that $f$ is analytic on and inside a contour $\Gamma$ that winds around $\spec(A)$, the spectrum of $A$, exactly once. Then, using the Cauchy integral formula, we have
\begin{equation}\label{eq:cauchy_integral_fA}
f(A) = \frac{1}{2\pi i} \int_\Gamma f(\zeta) (\zeta I - A)^{-1} \d \zeta,
\end{equation}
where $I$ denotes the $n\times n$ identity matrix. It is known~\cite{Higham2008,KandolfRelton2017,KandolfKoskelaReltonSchweitzer2021} that in this case, the first-order \Fd\ is given by
\begin{equation}\label{eq:frechet_derivative_cauchy_integral}
L_f(A,E) = \frac{1}{2\pi i} \int_\Gamma f(\zeta)(\zeta I - A)^{-1}E(\zeta I - A)^{-1}\d \zeta.
\end{equation}

We now derive a generalization of formula~\eqref{eq:frechet_derivative_cauchy_integral} for arbitrary derivative order $k$. This derivation is based on the following basic result on the $k$th order \Fd\ of the resolvent.

\begin{lemma}\label{lem:kth_fd_resolvent}
Let $\zeta \notin \spec(A)$. Then
\begin{equation*}\label{eq:kth_fd_resolvent}
L_{(\zeta-z)^{-1}}^{(k)}(A, E_1,\dots,E_k) = \sum\limits_{\pi \in S_k} (\zeta I-A)^{-1}E_{\pi(1)}(\zeta I-A)^{-1}E_{\pi(2)}(\zeta I-A)^{-1} \cdot{}\dots{}\cdot E_{\pi(k)}(\zeta I-A)^{-1},
\end{equation*}
where $S_k$ denotes the \emph{symmetric group of degree $k$}, i.e., the set of all permutations of $\{1,\dots,k\}$.
\end{lemma}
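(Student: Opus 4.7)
The plan is to prove this by induction on $k$. The base case $k=1$ asks for the first-order \Fd\ of the resolvent $z \mapsto (\zeta - z)^{-1}$, and reduces to the classical identity $L_{(\zeta - z)^{-1}}(A, E) = (\zeta I - A)^{-1} E (\zeta I - A)^{-1}$. This follows from the Neumann expansion
$$
(\zeta I - A - E)^{-1} = (\zeta I - A)^{-1} + (\zeta I - A)^{-1} E (\zeta I - A)^{-1} + O(\|E\|^2),
$$
which is valid for $\|E\|$ sufficiently small that $\zeta I - A - E$ remains invertible, and identifying the linear-in-$E$ term via the definition \eqref{eq:fd}.

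For the inductive step, I assume the formula holds at order $k-1$ and apply the recursive definition of the higher-order \Fd: namely, $L^{(k)}(A, E_1, \ldots, E_k)$ is the multilinear term of order one in $E_k$ extracted from
$$
L^{(k-1)}(A + E_k, E_1, \ldots, E_{k-1}) - L^{(k-1)}(A, E_1, \ldots, E_{k-1}).
$$
Substituting the inductive hypothesis turns this into a sum over $\sigma \in S_{k-1}$ of expressions
$$
(\zeta I - A - E_k)^{-1} E_{\sigma(1)} (\zeta I - A - E_k)^{-1} E_{\sigma(2)} \cdots E_{\sigma(k-1)} (\zeta I - A - E_k)^{-1},
$$
minus the corresponding unperturbed products. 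Each such product contains exactly $k$ copies of the perturbed resolvent, and expanding each of them to first order via the base-case Neumann argument inserts a single $E_k$ at one of $k$ positions, modulo a remainder that is $o(\|E_k\|)$ uniformly in the finitely many terms.

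The remaining step is combinatorial: for each $\sigma \in S_{k-1}$ and each insertion slot $j \in \{1, \ldots, k\}$, the resulting summand is a product of $k+1$ resolvents interspersed with $k$ direction matrices, where $E_k$ occupies the $j$th matrix slot and the other slots carry $E_{\sigma(1)}, \ldots, E_{\sigma(k-1)}$ in order. The map $(\sigma, j) \mapsto \pi \in S_k$ defined by setting $\pi(j) = k$ and filling the remaining positions of $\pi$ left-to-right with $\sigma(1), \ldots, \sigma(k-1)$ is a bijection between $S_{k-1} \times \{1, \ldots, k\}$ and $S_k$; the cardinality check $(k-1)! \cdot k = k!$ confirms that each permutation $\pi \in S_k$ arises exactly once. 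This reorganizes the double sum over $\sigma$ and $j$ into the claimed single sum over $\pi \in S_k$.

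The main obstacle I foresee is not analytical (the Neumann-series manipulation and the $o(\|E_k\|)$ control are routine) but rather bookkeeping: one has to be meticulous about how the insertion slot $j$ and the permutation $\sigma \in S_{k-1}$ combine to produce each element of $S_k$ exactly once, so that no term is double-counted and none is omitted. An alternative, equivalent approach would be to differentiate the Neumann series
$$
(\zeta I - A - \textstyle\sum_i t_i E_i)^{-1} = \sum_{m \geq 0} (\zeta I - A)^{-1} \bigl[(\textstyle\sum_i t_i E_i)(\zeta I - A)^{-1}\bigr]^m
$$
in the directions $t_1, \ldots, t_k$ at $t = 0$, and read off the mixed partial $\partial^k/\partial t_1 \cdots \partial t_k$; only the $m = k$ term contributes, and expanding the product yields the same sum over $S_k$ with the combinatorics made explicit by the polynomial multiplication.
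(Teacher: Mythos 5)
Your proposal is correct and follows essentially the same route as the paper: induction on $k$, with the Neumann series giving the base case and the first-order expansion of each perturbed resolvent in the inductive step, the double sum over $S_{k-1}$ and insertion slots recombining into the sum over $S_k$. Your explicit description of the bijection $(\sigma,j)\mapsto\pi$ just spells out the combinatorial bookkeeping that the paper's proof passes over when it "collects terms up to first order in $E_k$."
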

\begin{proof}
We prove the statement by induction. Using the Neumann series, for $\|E_1\|$ small enough, we have
\begin{eqnarray}
(\zeta I - (A+E_1))^{-1} &=& (\zeta I - A)^{-1}(I - E_1(\zeta I-A)^{-1})^{-1} \nonumber\\
 &=& (\zeta I - A)^{-1}\sum\limits_{\ell=0}^\infty (E_1(\zeta I-A)^{-1})^\ell \nonumber\\
&=& (\zeta I-A)^{-1} + \sum\limits_{\ell=1}^\infty (\zeta I-A)^{-1} (E_1(\zeta I-A)^{-1})^\ell \nonumber\\
&=& (\zeta I-A)^{-1} - (\zeta I-A)^{-1}E_1(\zeta I-A)^{-1} + o(\|E_1\|), \label{eq:neumann_series}
\end{eqnarray}
which implies
\begin{equation*}\label{eq:first_order_frechet_inverse}
    L^{(1)}_{(\zeta-z)^{-1}}(A, E_1) = (\zeta I-A)^{-1}E_1(\zeta I-A)^{-1},
\end{equation*}
so that the assertion of the lemma is valid for $k = 1$. Now, for $k > 1$, we inductively have
\begin{eqnarray}
& & L_{(\zeta-z)^{-1}}^{(k-1)}(A + E_k, E_1, \dots, E_{k-1}) \nonumber\\ 
&=& \sum\limits_{\pi \in S_{k-1}} (\zeta I-(A+E_k))^{-1} E_{\pi(1)} (\zeta I-(A+E_k))^{-1} E_{\pi(2)}(\zeta I-(A+E_k))^{-1} \cdot{}\dots{}\nonumber\\
& &\quad\quad\quad\quad\dots{}\cdot E_{\pi(k-1)}(\zeta I-(A+E_k))^{-1}\label{eq:proof_resolvent1}
\end{eqnarray}
Inserting~\eqref{eq:neumann_series} for $(\zeta I-(A+E_k))^{-1}$ into~\eqref{eq:proof_resolvent1} and collecting terms up to first order in $E_k$ gives
\begin{eqnarray}
& & L_{(\zeta-z)^{-1}}^{(k-1)}(A + E_k, E_1, \dots, E_{k-1}) \nonumber\\ 
&=& \sum\limits_{\pi \in S_{k-1}} (\zeta I-A))^{-1} E_{\pi(1)} (\zeta I-A)^{-1} E_{\pi(2)}(\zeta I-A)^{-1} \cdot{}\dots{}\cdot E_{\pi(k-1)}(\zeta I-A)^{-1}\nonumber\\
& & + \sum\limits_{\pi \in S_{k}} (\zeta I-A))^{-1} E_{\pi(1)} (\zeta I-A)^{-1} E_{\pi(2)}(\zeta I-A)^{-1} \cdot{}\dots{}\cdot E_{\pi(k)}(\zeta I-A)^{-1} + o(\|E_k\|)\nonumber\\
&=&  L_{(\zeta-z)^{-1}}^{(k-1)}(A, E_1, \dots, E_{k-1}) + \sum\limits_{\pi \in S_{k}} (\zeta I-A))^{-1} E_{\pi(1)} (\zeta I-A)^{-1} E_{\pi(2)}(\zeta I-A)^{-1} \cdot{}\dots{}\nonumber\\
& & \dots{}\cdot E_{\pi(k)}(\zeta I-A)^{-1} + o(\|E_k\|).\label{eq:proof_resolvent2}
\end{eqnarray}
Inserting relation~\eqref{eq:proof_resolvent2} into the recursive definition~\eqref{eq:kth_order_frechet} completes the proof of the lemma.\hfill\qed
\end{proof}

From Lemma~\ref{lem:kth_fd_resolvent}, we directly obtain an integral representation for the \Fd\ of an analytic function.

\begin{theorem}\label{the:integral_representation_Lk}
Let $f$ be analytic on and inside a contour $\Gamma$ that winds around $\spec(A)$ exactly once. Then
\begin{equation}\label{eq:integral_representation}
L_f^{(k)}(A, E_1, \dots E_k) = \frac{1}{2\pi i} \int_\Gamma \sum\limits_{\pi \in S_k} f(\zeta) M_\pi(\zeta; A, E_1,\dots, E_k) \d \zeta,
\end{equation}
where 
\begin{equation}\label{eq:integrand_M}
M_\pi(\zeta; A, E_1,\dots, E_k) = (\zeta I - A)^{-1}E_{\pi(1)}(\zeta I - A)^{-1}E_{\pi(2)}(\zeta I - A)^{-1}\cdots E_{\pi(k)}(\zeta I - A)^{-1}.
\end{equation}
\end{theorem}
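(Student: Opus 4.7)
The plan is to deduce the statement by applying the $k$th order Fréchet derivative operator (with respect to $A$) to both sides of the Cauchy integral formula~\eqref{eq:cauchy_integral_fA} and invoking Lemma~\ref{lem:kth_fd_resolvent} pointwise inside the integral.

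First I would write $f(A+H) = \frac{1}{2\pi i}\int_\Gamma f(\zeta)(\zeta I - (A+H))^{-1}\d\zeta$ for any perturbation $H$ with $\|H\|$ small enough that $\spec(A+H)$ still lies inside $\Gamma$. Iterating the recursive definition of the higher-order \Fd\ $k$ times gives a representation of $L_f^{(k)}(A,E_1,\dots,E_k)$ as a finite linear combination of expressions of the form $f(A + \text{linear combinations of the }E_i)$, modulo a remainder that is $o(\|E_1\|\cdots\|E_k\|)$. Plugging the Cauchy integral for each such term, I can pull the (finite) linear combination inside the integral, so that the integrand becomes exactly the same linear combination applied to $(\zeta I - (A + \cdots))^{-1}$. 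This shows that $L_f^{(k)}(A,E_1,\dots,E_k)$ equals $\frac{1}{2\pi i}\int_\Gamma f(\zeta) L_{(\zeta - z)^{-1}}^{(k)}(A,E_1,\dots,E_k)\d\zeta$, up to integrating a remainder term.

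The point that needs care is the exchange of the limit (implicit in the definition of the Fréchet derivative) with the contour integral, i.e., that the remainder $o(\|E_k\|)$ which appears pointwise in $\zeta$ really integrates to $o(\|E_k\|)$. This is the main obstacle in principle, but it is handled by the standard observation that on the compact contour $\Gamma$, the resolvent $(\zeta I - A)^{-1}$ is uniformly bounded, and the Neumann expansion used in the proof of Lemma~\ref{lem:kth_fd_resolvent} converges uniformly in $\zeta \in \Gamma$ whenever $\|E_k\| \cdot \max_{\zeta \in \Gamma}\|(\zeta I - A)^{-1}\|$ is less than one; combined with the continuity of $f$ on $\Gamma$, this yields a uniform bound on the integrand's remainder term and hence the desired $o(\|E_k\|)$ estimate after integration (of length $\len(\Gamma)$).

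Once the exchange is justified, the rest is immediate: applying Lemma~\ref{lem:kth_fd_resolvent} inside the integral replaces $L_{(\zeta - z)^{-1}}^{(k)}(A,E_1,\dots,E_k)$ with the sum over permutations $\pi \in S_k$ of $M_\pi(\zeta;A,E_1,\dots,E_k)$ as defined in~\eqref{eq:integrand_M}, yielding exactly~\eqref{eq:integral_representation}. Alternatively, one could skip the intermediate recursion and prove the formula directly by induction on $k$, differentiating the integrand of $L_f^{(k-1)}$ inductively; the work is essentially the same, since the inductive step merely repeats the argument already carried out for the resolvent in Lemma~\ref{lem:kth_fd_resolvent}.
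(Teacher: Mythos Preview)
Your proposal is correct and follows essentially the same route as the paper: both arguments boil down to the identity $L_f^{(k)}(A,E_1,\dots,E_k) = \frac{1}{2\pi i}\int_\Gamma f(\zeta)\, L^{(k)}_{(\zeta-z)^{-1}}(A,E_1,\dots,E_k)\d\zeta$ combined with Lemma~\ref{lem:kth_fd_resolvent}. The paper simply asserts this identity without comment, whereas you spell out the uniform-in-$\zeta$ Neumann-series bound that justifies passing the \Fd\ through the contour integral; this extra care is welcome and fills a gap the paper leaves implicit.
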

\begin{proof}
The assertion of the theorem follows directly from Lemma~\ref{lem:kth_fd_resolvent} upon noting that the relation
\begin{equation*}
    L_f^{(k)}(A,E_1,\dots,E_k) = \frac{1}{2\pi i} \int_\Gamma f(\zeta) L^{(k)}_{(\zeta-z)^{-1}}(A,E_1,\dots,E_k)\d \zeta.
\end{equation*}
holds.\hfill\qed
\end{proof}

\subsection{Related integral representations}\label{subsec:other_integral_formulas}
There are several important classes of functions which have similar integral representations involving resolvents. Of particular importance is the class of Stieltjes functions. A Stieltjes function is a function that can be represented as 
\begin{equation}\label{eq:stieltjes_function}
f(z) = \int_0^\infty \frac{\dmu}{t+z},
\end{equation}
where $\dmu$ is a nonnegative measure on $[0, \infty)$. Important examples of Stieltjes functions appearing in applications are inverse fractional powers $f(z) = z^{-\alpha}, \alpha \in (0,1)$ or $f(z) = \log(1+z)/z$. For further examples and an overview of theoretical properties of Stieltjes functions, we refer the reader to~\cite{AlzerBerg2002,Berg2007,Henrici1977} and the references therein.

When $f$ is a Stieltjes function, its (higher-order) \Fd{}s also admit integral representations.
\begin{corollary}\label{cor:stieltjes}
Let $f$ be a Stieltjes function and let $\spec(A) \cap (-\infty, 0] = \emptyset$. Then
\begin{equation}\label{eq:integral_representation_stieltjes}
L_f^{(k)}(A, E_1, \dots E_k) = -\int_0^\infty \sum\limits_{\pi \in S_k} M_\pi(-t; A, E_1,\dots, E_k) \dmu
\end{equation}
where $M_\pi(\ \cdot\ ; A, E_1,\dots, E_k)$ is defined in~\eqref{eq:integrand_M}.
\end{corollary}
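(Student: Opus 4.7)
My plan is to work directly with the Stieltjes representation $f(z) = \int_0^\infty (t+z)^{-1}\dmu$ and apply the $k$-th Fr\'echet derivative operator to both sides. First I would interchange differentiation and integration to obtain
\begin{equation*}
L_f^{(k)}(A, E_1, \dots, E_k) = \int_0^\infty L_{f_t}^{(k)}(A, E_1, \dots, E_k)\dmu,
\end{equation*}
where $f_t(z) := (t+z)^{-1}$. Then, observing the identity $f_t(z) = -((-t) - z)^{-1}$, Lemma~\ref{lem:kth_fd_resolvent} applied with $\zeta = -t$ yields
\begin{equation*}
L_{f_t}^{(k)}(A, E_1, \dots, E_k) = -\sum_{\pi \in S_k} M_\pi(-t; A, E_1, \dots, E_k),
\end{equation*}
and inserting this back into the preceding integral produces exactly~\eqref{eq:integral_representation_stieltjes}.

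The key step --- and the main obstacle --- is justifying the interchange of Fr\'echet differentiation and the $d\mu$-integration. The hypothesis $\spec(A) \cap (-\infty, 0] = \emptyset$ guarantees that $(tI+A)^{-1}$ exists for every $t \geq 0$, with a bound of the form $\|(tI+A)^{-1}\|_2 \leq C/(1+t)$ for some constant $C$ depending on $A$; a standard perturbation argument extends this bound (with a slightly larger constant) to $(tI + A + s_1 E_1 + \cdots + s_k E_k)^{-1}$ whenever $|s_1|, \dots, |s_k|$ are sufficiently small. Each summand $M_\pi(-t;\cdot)$ and each of its mixed partial derivatives in the $s_j$'s is therefore bounded uniformly in a neighborhood of $s = 0$ by a multiple of $(1+t)^{-(k+1)}$. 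Combined with the Stieltjes integrability condition $\int_0^\infty (1+t)^{-1}\dmu < \infty$, this furnishes a $d\mu$-integrable dominating function, and a standard differentiation-under-the-integral argument then permits the interchange.

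An alternative route, perhaps more in the spirit of Theorem~\ref{the:integral_representation_Lk}, would be to specialize $\Gamma$ to a keyhole contour around the branch cut $(-\infty, 0]$ typical of a Stieltjes function and then collapse $\Gamma$ onto the cut, recovering $\dmu$ via the Plemelj--Sokhotski jump formula for $f$. This requires additional care in handling the passage to an unbounded contour and the jump of $f$ across $(-\infty, 0]$, so I consider the direct Stieltjes-based derivation above to be the cleaner route.
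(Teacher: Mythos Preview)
Your argument is correct, but it follows a different route from the one the paper actually writes out. The paper starts from the Cauchy integral representation already established in Theorem~\ref{the:integral_representation_Lk}, substitutes the Stieltjes formula $f(\zeta)=\int_0^\infty (t+\zeta)^{-1}\dmu$ into the integrand, interchanges the order of the $\dmu$- and $\d\zeta$-integrations, and then evaluates the inner contour integral by the residue theorem (the only pole inside $\Gamma$ being at $\zeta=-t$). Your approach instead differentiates directly under the Stieltjes integral and invokes Lemma~\ref{lem:kth_fd_resolvent} for each resolvent $f_t$, with a dominated-convergence justification for the interchange. Interestingly, the paper explicitly mentions your route as a valid alternative (``repeating all the steps \dots\ adapted to the integral representation~\eqref{eq:stieltjes_function}'') but opts not to spell it out. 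Your version has the merit of being self-contained---it does not rely on Theorem~\ref{the:integral_representation_Lk}---and of making the analytic justification explicit, whereas the paper's version is shorter and purely algebraic once Theorem~\ref{the:integral_representation_Lk} is in hand (the Fubini step is left implicit). Your proposed second alternative via a keyhole contour and Plemelj--Sokhotski is more elaborate than what the paper does: no contour deformation onto the cut is needed, since the residue calculation handles everything with $\Gamma$ fixed.
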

\begin{proof}
The formula~\eqref{eq:integral_representation_stieltjes} can be found by essentially repeating all the steps in the proofs of Lemma~\ref{lem:kth_fd_resolvent} and Theorem~\ref{the:integral_representation_Lk} adapted to the integral representation~\eqref{eq:stieltjes_function}. To avoid unnecessary repetition, we instead sketch how one can find~\eqref{eq:integral_representation_stieltjes} starting from~\eqref{eq:integral_representation}. Any Stieltjes function is analytic outside $(-\infty,0]$. Thus, when $\spec(A) \cap (-\infty,0]\added{ = \emptyset}$, we can find a contour $\Gamma$ that allows to represent $f(A)$ and thus $L_f^{(k)}(A,E_1,\dots,E_k)$ by the Cauchy integral formula. Then, inserting the Stieltjes representation~\eqref{eq:stieltjes_function} of $f$ yields
\begin{eqnarray*}
L_f^{(k)}(A, E_1, \dots E_k) &=& \frac{1}{2\pi i} \int_\Gamma \sum\limits_{\pi \in S_k} f(\zeta) M_\pi(\zeta; A, E_1,\dots, E_k) \d \zeta \nonumber\\
&=& \frac{1}{2\pi i} \int_\Gamma \sum\limits_{\pi \in S_k} \int_0^\infty\frac{\dmu}{t+\zeta} M_\pi(\zeta; A, E_1,\dots, E_k) \d \zeta \nonumber\\
&=& \int_0^\infty \sum\limits_{\pi \in S_k} \frac{1}{2\pi i} \int_\Gamma \frac{1}{t+\zeta} M_\pi(\zeta; A, E_1,\dots, E_k) \d \zeta \dmu \nonumber\\
&=& -\int_0^\infty \sum\limits_{\pi \in S_k} M_\pi(-t; A, E_1,\dots, E_k) \dmu.
\end{eqnarray*}
The last equality follows from the residue theorem upon noting that the integrand has a single pole at $\zeta = -t$.\hfill\qed
\end{proof}

Another highly related class of functions consists of those functions which can be written as $f(z) = z g(z)$ with $g$ a Stieltjes function. According to the examples for Stieltjes functions we have given above, we have that $f(z) = z^{\sigma} = zz^{1-\sigma}, \sigma \in (0,1)$ or $f(z) = \log(1+z) = z\log(1+z)/z$ belong to this class. Another example is $W_0(z)$, the primary branch of the Lambert W function; see~\cite{KaluginJeffreyCorlessBorwein2012}. As matrix square roots and logarithms as well as the Lambert W function play an important role in many applications, this class of functions warrants a detailed study.

\begin{theorem}\label{the:stieltjes_times_z}
Let $f(z) = z g(z)$, where $g$ is a Stieltjes function~\eqref{eq:stieltjes_function} and let $\spec(A) \cap (-\infty, 0] = \emptyset$. Then
\begin{equation*}\label{eq:integral_representation_stieltjes_times_z}
L_f^{(k)}(A, E_1, \dots E_k) = \int_0^\infty t  \cdot \sum\limits_{\pi \in S_k} M_\pi(-t; A, E_1,\dots, E_k) \dmu
\end{equation*}
where $M_\pi(\ \cdot\ ; A, E_1,\dots, E_k)$ is defined in~\eqref{eq:integrand_M}.
\end{theorem}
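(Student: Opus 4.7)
The plan is to adapt the second proof strategy used for Corollary \ref{cor:stieltjes}, starting from the Cauchy representation of Theorem \ref{the:integral_representation_Lk} rather than redoing the inductive argument from scratch. Because $g$ is Stieltjes and $\spec(A)\cap(-\infty,0]=\emptyset$, the function $f(z)=zg(z)$ is analytic on a neighborhood of $\spec(A)$, and one can choose a contour $\Gamma$ that winds around $\spec(A)$ exactly once without crossing the branch cut $(-\infty,0]$. Applying Theorem \ref{the:integral_representation_Lk} to this $f$, inserting the Stieltjes representation \eqref{eq:stieltjes_function} of $g$, and swapping the order of integration (justified as in the proof of Corollary \ref{cor:stieltjes}) yields
\begin{equation*}
L_f^{(k)}(A, E_1, \dots, E_k) = \int_0^\infty \sum_{\pi \in S_k} \frac{1}{2\pi i}\int_\Gamma \frac{\zeta}{t+\zeta}\, M_\pi(\zeta; A, E_1, \dots, E_k)\d\zeta \dmu,
\end{equation*}
which differs from the intermediate expression appearing in the proof of Corollary \ref{cor:stieltjes} only by the extra factor $\zeta$ in the inner integrand.

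The key algebraic step is the partial-fraction-style decomposition $\zeta/(t+\zeta)=1-t/(t+\zeta)$, which splits each inner integral into two pieces. The piece arising from $-t/(t+\zeta)$ is handled verbatim by the residue computation of Corollary \ref{cor:stieltjes}: the $\zeta$-integral contributes $-M_\pi(-t;A,E_1,\dots,E_k)$, so after multiplying by the leading factor $-t$ one obtains $t\,M_\pi(-t;A,E_1,\dots,E_k)$, which upon summing over $\pi$ and integrating against $\dmu$ reproduces exactly the claimed right-hand side.

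The remaining obstacle---and the only place where something genuinely new beyond Corollary \ref{cor:stieltjes} is needed---is to show that the ``$1$'' piece
\begin{equation*}
\frac{1}{2\pi i}\int_\Gamma M_\pi(\zeta; A, E_1, \dots, E_k)\d\zeta
\end{equation*}
vanishes for every $\pi \in S_k$. I would establish this by observing that $M_\pi$ contains $k+1\geq 2$ resolvent factors $(\zeta I-A)^{-1}$, each of which decays like $\zeta^{-1}$ as $\zeta\to\infty$, so the integrand is $\mathcal{O}(\zeta^{-(k+1)})$; deforming $\Gamma$ to a circle of arbitrarily large radius then forces the integral to zero. An equivalent and more conceptual justification is to invoke Theorem \ref{the:integral_representation_Lk} with the constant function $f\equiv 1$: the left-hand side is $L_1^{(k)}(A,E_1,\dots,E_k)=0$, so the sum over $\pi$ of the above integrals must vanish, and combined with the decay argument for individual terms this completes the proof.
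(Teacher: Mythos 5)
Your proof is correct, but it follows a genuinely different route from the paper. The paper argues at the level of the Fr\'echet derivative definition: for $k=1$ it writes $f(A+E_1)=Ag(A+E_1)+E_1g(A+E_1)$, inserts the Stieltjes representation of $L_g^{(1)}(A,E_1)$ from Corollary~\ref{cor:stieltjes}, and combines the two terms via $-A(tI+A)^{-1}+ (tI+A)(tI+A)^{-1}= t(tI+A)^{-1}$ to produce the factor $t$; for $k>1$ it then appeals to the same induction as in Lemma~\ref{lem:kth_fd_resolvent}, which is unobstructed because the premultiplication by $A$ no longer appears. You instead stay entirely in the contour-integral framework: apply Theorem~\ref{the:integral_representation_Lk} to $f(z)=zg(z)$, interchange the $\zeta$- and $t$-integrations, split $\zeta/(t+\zeta)=1-t/(t+\zeta)$, reuse the residue computation from Corollary~\ref{cor:stieltjes} for the second piece, and show that $\frac{1}{2\pi i}\int_\Gamma M_\pi(\zeta;A,E_1,\dots,E_k)\,\d\zeta=0$ for the first piece via the $\mathcal{O}(\zeta^{-(k+1)})$ resolvent decay (or, as you note, via $L_1^{(k)}=0$, which by itself already suffices since only the sum over $\pi$ is needed). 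All steps check out: the splitting is done inside the inner contour integral at fixed $t$, so it is harmless even when $\mu$ has infinite mass, and the pole $\zeta=-t$ lies outside $\Gamma$, giving $t\,M_\pi(-t;A,E_1,\dots,E_k)$ exactly as claimed. What each approach buys: yours treats all derivative orders $k$ uniformly in one computation and runs parallel to the sketch proof of Corollary~\ref{cor:stieltjes}, at the cost of justifying the integral interchange and the extra vanishing-integral observation; the paper's stays with elementary first-order perturbation algebra plus induction and never needs a Fubini-type argument, but handles $k=1$ and $k>1$ in two separate (and, for $k>1$, only sketched) steps.
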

\begin{proof}
According to the definition of $f$, we have $f(A + E_1) = Ag(A+E_1) + E_1g(A+E_1)$. Now, inserting the formula~\eqref{eq:integral_representation_stieltjes} for the first-order \Fd\ of $g$, we have
\begin{equation*}
    g(A+E_1) = g(A) - \int_0^\infty (tI+A)^{-1}E_1(tI+A)^{-1}\dmu + o(\|E_1\|)
\end{equation*}
which yields
\begin{eqnarray*}
    & & f(A+E_1)  \\
    &=& Ag(A) - A\int_0^\infty (tI+A)^{-1}E_1(tI+A)^{-1}\dmu + E_1g(A) + o(\|E_1\|)\\
    &=& f(A) - A\int_0^\infty (tI+A)^{-1}E_1(tI+A)^{-1}\dmu + \int_0^\infty E_1(tI+A)^{-1}\dmu + o(\|E_1\|)\\
    &=& f(A) - A\int_0^\infty (tI+A)^{-1}E_1(tI+A)^{-1}\dmu + \int_0^\infty (tI+A)(tI+A)^{-1}E_1(tI+A)^{-1}\dmu + o(\|E_1\|)\\
    &=& f(A) +\int_0^\infty (tI+A-A)(tI+A)^{-1}E_1(tI+A)^{-1}\dmu + o(\|E_1\|)\\
    &=& f(A) + \int_0^\infty t(A+tI)^{-1}E_1(tI+A)^{-1}\dmu + o(\|E_1\|).
\end{eqnarray*}
This proves the statement for $k = 1$. For $k > 1$, the statement can be obtained by following exactly the same steps as in previous proofs, as the premultiplication by $A$, which complicated the situation for $k=1$ is not present any longer in the integral representation of $L_f^{(1)}(A,E_1)$. To avoid repetition, we thus omit these obvious steps here.\hfill\qed
\end{proof}

\begin{remark}\label{rem:cardoso}
We want to remark that, for the special case $f(z) = z^{1/p}, p \in \mathbb{N}$ and $k=1$, a related (but not identical) integral representation for $L_{z^{1/p}}(A,E)$ has been discussed in~\cite{Cardoso2012}.
\end{remark}

\section{The level-2 condition number of some classes of functions of Hermitian matrices}\label{sec:lvl2_condition_number}
As explained in the introduction of the paper, an important application of the second-order Fr\'echet derivative is estimating the level-2 condition number, which can be bounded as
\begin{equation}\label{eq:upper_bound_level2_condition_number}
    \condtwoabs(f, A) \leq \max\limits_{\|Z\|=1}\ \max\limits_{\|E\|=1}\ \|L_f^{(2)}(A, E, Z)\|
\end{equation}
for any matrix norm. Typically, the Frobenius norm is used for analyzing level-2 condition numbers, so this is the case that we specifically focus on in the following. It is quite difficult to obtain any more specific expressions or bounds for $\condtwoabs(f, A)$ than~\eqref{eq:upper_bound_level2_condition_number} without additional assumptions on $f$ or $A$. In~\cite{HighamRelton2014}, the authors derive explicit expressions for the level-2 condition number of the exponential of a normal matrix and inverses of general nonsingular matrices. Additionally, lower bounds for the level-2 condition number of functions of Hermitian matrices with monotonic derivative are obtained; see~Theorem~\ref{the:bound_hr} below. Using the integral representations derived in Section~\ref{sec:integral_representation}, we can show that these lower bounds are in many cases actually exact representations for the level-2 condition number. We begin by recalling the precise result from~\cite{HighamRelton2014}.

\begin{theorem}[Theorem~5.5 in~\cite{HighamRelton2014}]\label{the:bound_hr}
Let $A \in \Cnn$ be Hermitian with eigenvalues $\lambda_i$ and let $f: \R \rightarrow \R$ be such that $f(A)$ is defined and $f$ has a strictly monotonic derivative. Then in the Frobenius norm,
\begin{equation}\label{eq:condabs1}
\condabs(f,A) = \max\limits_{i} |f^\prime(\lambda_i)|
\end{equation}
Moreover, if the maximum in~\eqref{eq:condabs1} is attained for a unique $i$, say $i = k$, then
\begin{equation}\label{eq:condabs2}
\condtwoabs(f, A) \geq |f^{\prime\prime}(\lambda_k)|.
\end{equation}
\end{theorem}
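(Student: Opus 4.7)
The plan is to prove the two claims in turn, leveraging the standard spectral-decomposition representation of the first-order \Fd\ of Hermitian matrices. Write $A = Q\Lambda Q^H$ with $Q$ unitary and $\Lambda$ the diagonal matrix of eigenvalues $\lambda_i$, and use the Daleckii--Krein representation
$$L_f(A,E) = Q\bigl(f^{[1]}(\Lambda) \odot (Q^H E Q)\bigr) Q^H,$$
where $f^{[1]}(\Lambda)$ is the matrix of first divided differences $f^{[1]}(\lambda_i,\lambda_j)$ and $\odot$ is the Hadamard product. This identity can be recovered from Theorem~\ref{the:integral_representation_Lk} with $k=1$ by substituting the spectral decomposition and applying the residue theorem to each scalar integral in the resulting sum. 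Unitary invariance of $\|\cdot\|_F$ reduces the maximisation of $\|L_f(A,E)\|_F$ over $\|E\|_F=1$ to the problem $\max_{i,j}|f^{[1]}(\lambda_i,\lambda_j)|$. By the mean value theorem $f^{[1]}(\lambda_i,\lambda_j)=f'(\xi_{ij})$ for some $\xi_{ij}$ between $\lambda_i$ and $\lambda_j$, and strict monotonicity of $f'$ forces $|f'(\xi_{ij})|\le\max_i|f'(\lambda_i)|$, with equality on the diagonal $i=j$. This establishes~\eqref{eq:condabs1}.

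For the lower bound on $\condtwoabs(f,A)$, I would exhibit a single perturbation direction realising the claimed growth. Let $q_k$ be a unit eigenvector of $\lambda_k$ and set $Z:=q_kq_k^H$, so that $Z$ is Hermitian with $\|Z\|_F=1$. Since $A+\varepsilon Z$ is Hermitian for real $\varepsilon$, the first part applies and $\condabs(f,A+\varepsilon Z)=\max_i|f'(\mu_i(\varepsilon))|$, where the $\mu_i(\varepsilon)$ are its eigenvalues. Rellich/first-order analytic perturbation theory yields $\mu_k(\varepsilon)=\lambda_k+\varepsilon\, q_k^H Z q_k + O(\varepsilon^2)=\lambda_k+\varepsilon+O(\varepsilon^2)$ and $\mu_j(\varepsilon)=\lambda_j+O(\varepsilon^2)$ for $j\ne k$, since $q_j^H Z q_j=0$. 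Because $|f'(\lambda_k)|$ is the \emph{unique} maximiser and $f'$ is continuous, for all sufficiently small $\varepsilon>0$ the maximum in the formula for $\condabs(f,A+\varepsilon Z)$ is still attained at the index corresponding to $\mu_k$, giving
$$\condabs(f,A+\varepsilon Z)=\bigl|f'(\lambda_k)+\varepsilon f''(\lambda_k)+O(\varepsilon^2)\bigr|.$$
Strict monotonicity of $f'$ together with the uniqueness of the maximum forces $f'(\lambda_k)\ne 0$ whenever $A$ possesses more than one distinct eigenvalue, so the absolute value is smooth near $f'(\lambda_k)$ and Taylor expansion gives
$$\frac{|\condabs(f,A+\varepsilon Z)-\condabs(f,A)|}{\varepsilon}=|f''(\lambda_k)|+O(\varepsilon).$$
Passing to $\varepsilon\to 0^+$ bounds the supremum in the definition of $\condtwoabs(f,A)$ from below by $|f''(\lambda_k)|$, which is~\eqref{eq:condabs2}.

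The main obstacle I foresee is not analytic but combinatorial: guaranteeing that the unique index $k$ at which $\condabs$ is realised survives the perturbation, and dealing with multiple eigenvalues. If $\lambda_k$ has geometric multiplicity greater than one, first-order perturbation theory must be carried out inside the corresponding invariant subspace; however, since $Z=q_kq_k^H$ has rank one, exactly one eigenvalue branch moves at first order by $+\varepsilon$ while the others remain at $\lambda_k$ to first order, so the argument goes through unchanged. The fully degenerate case $A=\lambda_kI$ needs to be treated separately since the uniqueness hypothesis is then vacuous, but a direct computation with $Z=q_kq_k^H$ handles it. The key conceptual point throughout is that the unique-maximum hypothesis is precisely what freezes the index at which $\condabs$ is evaluated under small perturbations, enabling the joint Taylor expansion of $|{\cdot}|$ and $f'$ through that index.
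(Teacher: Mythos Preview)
The paper does not supply its own proof of this theorem: it is quoted verbatim as Theorem~5.5 from~\cite{HighamRelton2014} and used as a black box to pair with the new upper bound of Theorem~\ref{the:lvl2_stieltjes_bound}. There is therefore nothing in the present paper to compare your argument against.

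That said, your outline is sound and is essentially the standard route (and, as far as I am aware, the one taken in~\cite{HighamRelton2014}). The Daleckii--Krein representation together with unitary invariance of the Frobenius norm reduces~\eqref{eq:condabs1} to the scalar maximisation $\max_{i,j}|f^{[1]}(\lambda_i,\lambda_j)|$, and your mean-value/monotonicity argument correctly identifies the diagonal as optimal. For~\eqref{eq:condabs2}, perturbing along $Z=q_kq_k^H$ and invoking first-order eigenvalue perturbation is exactly the right idea; the uniqueness hypothesis is what lets you freeze the maximising index under small perturbations, as you observe. Two small remarks: your claim that $f'(\lambda_k)\neq 0$ follows because if $f'(\lambda_k)=0$ and $f'$ is strictly monotonic then any other eigenvalue $\lambda_j\neq\lambda_k$ would give $|f'(\lambda_j)|>0$, contradicting maximality---so this step is fine; and the ``fully degenerate'' case $A=\lambda_k I$ with $n>1$ is actually excluded by the uniqueness hypothesis (every index attains the maximum), so no separate treatment is needed there.
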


Note that when $f$ is a Stieltjes function~\eqref{eq:stieltjes_function}, its derivatives are given by
\begin{equation}\label{eq:stieltjes_function_derivative}
    f^{(k)}(z) = (-1)^k k! \int_0^\infty \frac{\dmu}{(z+t)^{k+1}}.
\end{equation}
and when $f(z) = zg(z)$, where $g$ is a Stieltjes function~\eqref{eq:stieltjes_function}, its derivatives are given by
\begin{equation}\label{eq:derivative_g}
    f^{(k)}(z) = (-1)^{k+1}k! \int_0^\infty \frac{t\dmu}{(z+t)^{k+1}}.
\end{equation}
From these equations, one can easily deduce that both of these function classes have strictly completely monotonic derivatives, so that Theorem~\ref{the:bound_hr} is applicable to them. The following result shows that it is also possible to obtain an \emph{upper} bound on the level-2 condition number.

\begin{theorem}\label{the:lvl2_stieltjes_bound}
Let $f$ be a Stieltjes function or a function of the form $f(z) = zg(z)$, where $g$ is a Stieltjes function and let $A$ be Hermitian positive definite with smallest eigenvalue $\lmin$. Then
\begin{equation}\label{eq:lvl2_stieltjes_bound}
\condtwoabs(f, A) \leq |f^{\prime\prime}(\lmin)|.
\end{equation}
\end{theorem}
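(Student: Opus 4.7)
The plan is to start from the Frobenius-norm upper bound
\[
\condtwoabs(f,A) \leq \max_{\|E\|_F=1}\max_{\|Z\|_F=1} \|L_f^{(2)}(A,E,Z)\|_F,
\]
and control the right-hand side using the integral representations for $L_f^{(2)}$ provided by Corollary~\ref{cor:stieltjes} and Theorem~\ref{the:stieltjes_times_z}, bringing the Frobenius norm inside the integral by the triangle inequality. The goal is to recognize the resulting integral as precisely $|f''(\lmin)|$ via~\eqref{eq:stieltjes_function_derivative} or~\eqref{eq:derivative_g}.

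Concretely, I would first treat the Stieltjes case. For $k=2$ the sum in~\eqref{eq:integral_representation_stieltjes} contains exactly the two terms
\[
(tI+A)^{-1} E (tI+A)^{-1} Z (tI+A)^{-1} \quad\text{and}\quad (tI+A)^{-1} Z (tI+A)^{-1} E (tI+A)^{-1}.
\]
Each is a product of five matrices, and I would bound its Frobenius norm by repeatedly applying the mixed submultiplicativity $\|XY\|_F \leq \|X\|_2 \|Y\|_F$ so that the two direction terms $E, Z$ each contribute a Frobenius-norm factor, while the three resolvent factors contribute spectral-norm factors. Since $A$ is Hermitian positive definite and $t \geq 0$, we have $\|(tI+A)^{-1}\|_2 = 1/(t+\lmin)$, and using $\|E\|_F = \|Z\|_F = 1$ yields
\[
\|L_f^{(2)}(A,E,Z)\|_F \leq 2\int_0^\infty \frac{\dmu}{(t+\lmin)^3}.
\]
By~\eqref{eq:stieltjes_function_derivative} with $k=2$, the right-hand side is exactly $|f''(\lmin)|$.

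For the second class $f(z) = zg(z)$ with $g$ Stieltjes, I would repeat the same argument starting from the representation in Theorem~\ref{the:stieltjes_times_z}, which differs only by the extra factor $t$ inside the integral. This yields
\[
\|L_f^{(2)}(A,E,Z)\|_F \leq 2\int_0^\infty \frac{t\dmu}{(t+\lmin)^3},
\]
and by~\eqref{eq:derivative_g} the right-hand side equals $|f''(\lmin)|$.

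I do not expect any serious obstacle: once the integral representations of $L_f^{(2)}$ and of $f''$ are both available and both involve only the resolvent at $-t$, the argument reduces to a clean norm estimate on a three-resolvent/two-direction product. The one point that requires a little care is selecting the right Frobenius/spectral split so that both direction terms get Frobenius-norm factors of $1$ and every resolvent gets spectral-norm factor $1/(t+\lmin)$; beyond this, the proof is essentially a reading of the two integral formulas side by side.
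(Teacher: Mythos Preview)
Your proposal is correct and follows essentially the same route as the paper: both start from the bound $\condtwoabs(f,A)\le\max_{\|E\|_F=\|Z\|_F=1}\|L_f^{(2)}(A,E,Z)\|_F$, insert the integral representations from Corollary~\ref{cor:stieltjes} and Theorem~\ref{the:stieltjes_times_z}, push the Frobenius norm under the integral, bound each five-factor product using mixed spectral/Frobenius submultiplicativity so that the three resolvents contribute $\|(A+tI)^{-1}\|_2^3=(t+\lmin)^{-3}$, and then identify the resulting integral with $|f''(\lmin)|$ via~\eqref{eq:stieltjes_function_derivative} or~\eqref{eq:derivative_g}. The only cosmetic difference is that the paper phrases the norm step via $\|BCD\|_F\le\|B\|_2\|C\|_F\|D\|_2$ together with $\|\cdot\|_2\le\|\cdot\|_F$, while you describe it as repeated use of $\|XY\|_F\le\|X\|_2\|Y\|_F$; both yield the same bound.
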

\begin{proof}
We begin with the case that $f$ is a Stieltjes function. Then, according to Corollary~\ref{cor:stieltjes}, its second-order Fr\'echet derivative is given by
\begin{equation}\label{eq:stieltjes_second_order_Fd}
L_f^{(2)}(A, E, Z) = \int_0^\infty (A+tI)^{-1}E(A+tI)^{-1}Z(A+tI)^{-1} + (A+tI)^{-1}Z(A+tI)^{-1}E(A+tI)^{-1} \dmu.
\end{equation}
Taking the Frobenius norm in~\eqref{eq:stieltjes_second_order_Fd} and inserting into~\eqref{eq:upper_bound_level2_condition_number} gives
\begin{eqnarray}
    \condtwoabs(f, A) &\leq& \max\limits_{\|Z\|_F=1}\ \max\limits_{\|E\|_F=1} \int_0^\infty \|(A+tI)^{-1}E(A+tI)^{-1}Z(A+tI)^{-1}\|_F \nonumber \\
                    & & \qquad\qquad\qquad\qquad+ \|(A+tI)^{-1}Z(A+tI)^{-1}E(A+tI)^{-1}\|_F \dmu.\label{eq:upper_bound_level2_condition_number_integral}
\end{eqnarray}
As the Frobenius norm is unitarily invariant, the inequality $\|BCD\|_F \leq \|B\|_2\|C\|_F\|D\|_2$ holds for any three matrices $B, C, D$ of compatible sizes; see~\cite[Corollary~3.5.10]{HornJohnson1991}. Applying this inequality in~\eqref{eq:upper_bound_level2_condition_number_integral} and noting that $\|B\|_2 \leq \|B\|_F$ for any matrix $B$, we arrive at
\begin{equation}
\condtwoabs(f, A) \leq 2\int_0^\infty \|(A+tI)^{-1}\|_2^3 \dmu. \label{eq:upper_bound_level2_condition_number_integral2}
\end{equation}
As $A$ is Hermitian positive definite, we have $\|(A+tI)^{-1}\|^3 = \frac{1}{(\lmin+t)^3}$. Thus,~\eqref{eq:upper_bound_level2_condition_number_integral2} reduces to
\begin{equation*}\label{eq:bound_using_derivative}
    \condtwoabs(f, A) \leq 2\int_0^\infty \frac{1}{(\lmin+t)^3} \dmu = f^{\prime\prime}(\lmin),
\end{equation*}
where we have used~\eqref{eq:stieltjes_function_derivative} with $k=2$ for the second equality. This proves~\eqref{eq:lvl2_stieltjes_bound} for the case that $f$ is a Stieltjes function. When $f(z) = zg(z)$ with $g$ a Stieltjes function, following the same steps as above after using Theorem~\ref{the:stieltjes_times_z} to represent the second-order \Fd\ yields
\begin{equation*}
\condtwoabs(f, A) \leq 2\int_0^\infty t\|(A+tI)^{-1}\|_2^3 \dmu. \label{eq:upper_bound_level2_condition_number_integral2_zgz}
\end{equation*}
From~\eqref{eq:derivative_g}, we then find
\begin{equation*}
\condtwoabs(f, A) \leq 2\int_0^\infty \frac{t}{(\lmin+t)^3} \dmu = |f^{\prime\prime}(\lmin)|, \label{eq:upper_bound_level2_condition_number_integral3_zgz}
\end{equation*}
which concludes the proof of the theorem.\hfill\qed
\end{proof}

\begin{corollary}\label{cor:lvl2_exact}
Let the assumptions of Theorem~\ref{the:lvl2_stieltjes_bound} hold and assume further that $\lmin$ is a simple eigenvalue of $A$. Then
\begin{equation}\label{eq:lvl2_stieltjes_exact}
\condtwoabs(f, A) = |f^{\prime\prime}(\lmin)|.
\end{equation}
\end{corollary}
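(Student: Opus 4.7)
The plan is to combine the two bounds already available: Theorem~\ref{the:lvl2_stieltjes_bound} gives the upper bound $\condtwoabs(f,A) \le |f''(\lmin)|$ directly, so the whole task reduces to producing the matching lower bound $\condtwoabs(f,A) \ge |f''(\lmin)|$. For this, I would invoke Theorem~\ref{the:bound_hr} with the eigenvalue $\lambda_k = \lmin$; its second conclusion yields precisely $\condtwoabs(f,A) \ge |f''(\lmin)|$, provided I can verify (i) that $f$ has a strictly monotonic derivative and (ii) that the maximum of $|f'(\lambda_i)|$ over $\spec(A)$ is attained at a \emph{unique} eigenvalue, namely $\lmin$.

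Applicability of Theorem~\ref{the:bound_hr} is straightforward: the paragraph following~\eqref{eq:derivative_g} already observes that both Stieltjes functions and functions of the form $z g(z)$ with $g$ Stieltjes have strictly completely monotonic derivatives, so hypothesis (i) is satisfied. For (ii), I would argue using the explicit representations~\eqref{eq:stieltjes_function_derivative} and~\eqref{eq:derivative_g} specialized to $k=1$: in the Stieltjes case $|f'(\lambda)| = \int_0^\infty (t+\lambda)^{-2}\,\mathrm{d}\mu(t)$, and in the $zg(z)$ case $|f'(\lambda)| = \int_0^\infty t(t+\lambda)^{-2}\,\mathrm{d}\mu(t)$. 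In either case the integrand is pointwise strictly decreasing in $\lambda$ on $(0,\infty)$, so (assuming $\mu$ is non-trivial, for otherwise $f$ is constant and the corollary is vacuous) $\lambda \mapsto |f'(\lambda)|$ is strictly decreasing on $(0,\infty)$.

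Since $A$ is Hermitian positive definite, its eigenvalues lie in $(0,\infty)$, and strict decrease of $|f'|$ combined with the simplicity of $\lmin$ gives $|f'(\lmin)| > |f'(\lambda_i)|$ for every eigenvalue $\lambda_i \ne \lmin$. Hence the unique maximum in~\eqref{eq:condabs1} is attained at $\lmin$, and Theorem~\ref{the:bound_hr} applies to give $\condtwoabs(f,A) \ge |f''(\lmin)|$. Combined with Theorem~\ref{the:lvl2_stieltjes_bound}, this yields~\eqref{eq:lvl2_stieltjes_exact}.

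There is no real technical obstacle here; all the analytic work was done in Theorem~\ref{the:lvl2_stieltjes_bound} and Theorem~\ref{the:bound_hr}, and the corollary is essentially a bookkeeping step. The only point that deserves care is the monotonicity argument for $|f'|$, where one must explicitly note that the Stieltjes measure is non-trivial so that the pointwise strict monotonicity of the integrand transfers to the integral; the degenerate case $\mu \equiv 0$ can be dismissed in a single sentence.
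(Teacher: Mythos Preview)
Your proposal is correct and follows essentially the same approach as the paper: combine the upper bound from Theorem~\ref{the:lvl2_stieltjes_bound} with the lower bound from Theorem~\ref{the:bound_hr}, after checking that $|f'|$ is strictly decreasing on $(0,\infty)$ so that the maximum in~\eqref{eq:condabs1} is attained uniquely at the simple eigenvalue $\lmin$. The only difference is that you spell out the monotonicity of $|f'|$ via the integral formulas~\eqref{eq:stieltjes_function_derivative}--\eqref{eq:derivative_g} and note the triviality of the $\mu\equiv 0$ case, whereas the paper simply asserts the monotonicity of $f'$ directly.
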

\begin{proof}
When $f$ is a Stieltjes function, $f^\prime$ is negative and strictly monotonically increasing on $(0,\infty)$. Therefore, the maximum in~\eqref{eq:condabs1} is attained for $\lmin$ and if $\lmin$ is a simple eigenvalue of $A$, we get from~\eqref{eq:condabs2} in Theorem~\ref{the:bound_hr} that $\condtwoabs(f,A) \geq |f^{\prime\prime}(\lmin)|$. From Theorem~\ref{the:lvl2_stieltjes_bound}, we further have that $\condtwoabs(f,A) \leq |f^{\prime\prime}(\lmin)|$. Both inequalities together imply~\eqref{eq:lvl2_stieltjes_exact}. The case $f(z) = zg(z)$ follows analogously, noting that in this case $f^\prime$ is positive and strictly monotonically decreasing on $(0,\infty)$, so that the maximum in\eqref{eq:condabs1} is again attained for $\lmin$.\hfill\qed
\end{proof}

\begin{remark}\label{rem:cond_inverse}
The function $f(z) = z^{-1}$ is a Stieltjes function, so for a matrix that fulfills the assumptions of Corollary~\ref{cor:lvl2_exact}, we have that 
$$\condtwoabs(z^{-1}, A) = \frac{2}{\lmin^3} = 2\|A^{-1}\|_2^3,$$
which exactly agrees with the formula given for the level-2 condition number of the inverse in~\cite[Eq.~(5.13)]{HighamRelton2014}. Note, however, that the formula for the inverse from~\cite{HighamRelton2014} holds for \emph{any} nonsingular $A$, not necessarily Hermitian or positive definite.
\end{remark}

\begin{experiment}\label{ex:lvl2_conditionnumber}
Corollary~8 gives an exact expression for the level-two condition number in cases where previously only bounds where available. While our exact expression in fact agrees with the lower bound that was previously available, it is instructive to compare it to the upper bounds~\eqref{eq:upper_bound_level2_condition_number}: Whenever it is of interest to use the condition number in order to obtain accuracy guarantees for a computed result, upper bounds are of much more use than lower bounds. In this experiment, we compare~\eqref{eq:upper_bound_level2_condition_number} and~\eqref{eq:lvl2_stieltjes_exact} for matrices from the \texttt{anymatrix} collection~\cite{Anymatrix}\footnote{available at \url{https://github.com/mmikaitis/anymatrix}}. Specifically, we select all matrices which \dots
\begin{itemize}
\item[a)] \dots are Hermitian positive definite (where we also exclude matrices that are numerically singular, i.e., have a positive smallest eigenvalue in the order of the unit roundoff),
\item[b)] \dots have a smallest eigenvalue with multiplicity one, \dots
\item[c)] have a size of $n \leq 10$ or are scalable in size.
\end{itemize}
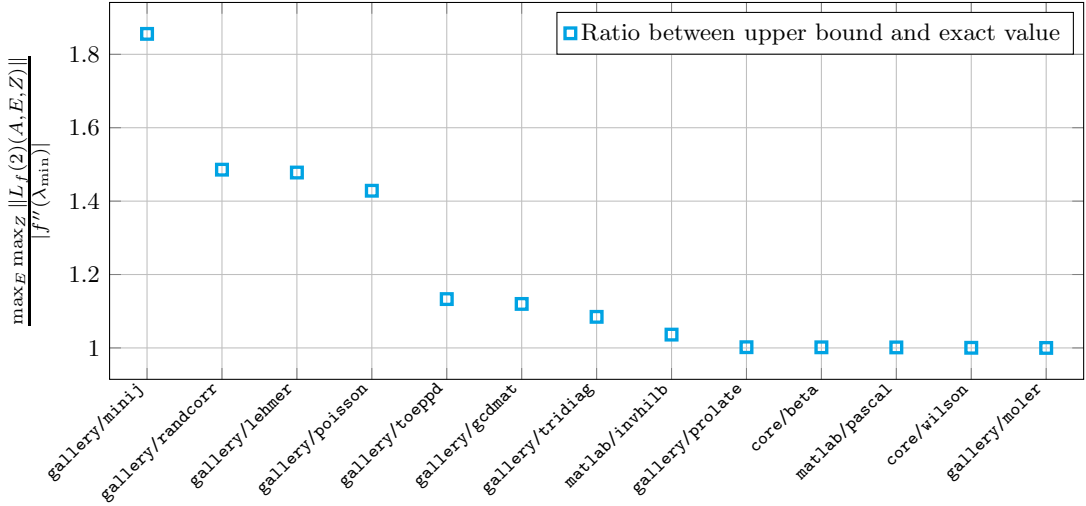
\begin{figure}
\centering
\tikzsetnextfilename{ratio}
\pgfplotsset{height=0.4\linewidth,width=0.875\linewidth,compat=1.10,every axis/.append style={legend style={/tikz/every even column/.append style={column sep=6pt}}}}
\pgfplotsset{every tick label/.append style={font=\small}}

\noindent%
\begin{tikzpicture}[scale=1]%
    \begin{axis}[legend style={at={(.99,0.975)}}, 
   	anchor=north east, legend columns=1,cycle list name=list_std3, 
   	xmin=0.5, xmax=13.5,grid=major, 
   	xlabel={}, ylabel={$\frac{\max_E \max_Z \|L_f(2)(A,E,Z)\|}{|f^{\prime\prime}(\lmin)|}$}, 
   	title={Comparison of exact level-two condition number and upper bound},
   	xticklabels={\texttt{\scriptsize gallery/minij}, \texttt{\scriptsize gallery/randcorr}, \texttt{\scriptsize gallery/lehmer}, \texttt{\scriptsize gallery/poisson}, \texttt{\scriptsize gallery/toeppd}, \texttt{\scriptsize gallery/gcdmat}, \texttt{\scriptsize gallery/tridiag}, \texttt{\scriptsize matlab/invhilb}, \texttt{\scriptsize gallery/prolate}, \texttt{\scriptsize core/beta}, \texttt{\scriptsize matlab/pascal}, \texttt{\scriptsize core/wilson}, \texttt{\scriptsize gallery/moler}},xtick={1,...,13}, x tick label style={rotate=45,anchor=east}]

\addplot+[only marks, mark=square] table [x ={ind},y ={ratio}] {figs/dat/test_condnum.dat};\addlegendentry{\small Ratio between upper bound and exact value}

\end{axis}
\end{tikzpicture}
\caption{Ratio between upper bound~\eqref{eq:upper_bound_level2_condition_number} and exact level-two condition number~\eqref{eq:lvl2_stieltjes_exact} for $f(z) = z^{-1/2}$ and various Hermitian positive definite matrices from the \texttt{anymatrix} collection. }\label{fig:experiment_ratio}
\end{figure}
While conditions a) and b) are clearly necessary for Theorem~\ref{the:lvl2_stieltjes_bound} and Corollary~\ref{cor:lvl2_exact} to hold, condition c) is included due to the very high cost of $\mathcal{O}(n^7)$ required for computing the Kronecker form of the second-order \Fd{} using~\cite[Algorithm~4.2]{HighamRelton2014}. In Figure~\ref{fig:experiment_ratio}, we plot the ratio between the upper bound~\eqref{eq:upper_bound_level2_condition_number} and the exact value~\eqref{eq:lvl2_stieltjes_exact} for all matrices fulfilling a)-c). We can observe that our new result actually does not improve over the upper bound by too much in most cases: The largest improvement is by a factor of about $1.9$, while for many matrices, the two values more or less agree. However, we want to stress that the main benefit of our result is that it gives a very concise, closed form of the level-two condition number and that it can be computed very efficiently. It only requires finding the smallest eigenvalue of $A$, while naively evaluating the upper bound~\eqref{eq:upper_bound_level2_condition_number} has complexity $\mathcal{O}(n^7)$ and requires $\mathcal{O}(n^6)$ storage.
\end{experiment}

\section{Computational methods for higher-order \Fd{}s}\label{sec:algorithms}
This section is devoted to computational methods for approximating $L_f^{(k)}(A, E_1,\dots,E_k)$. In particular, we discuss how the integral representations that we have derived in Section~\ref{sec:integral_representation} can be used in the context of numerical quadrature methods. Before that, we briefly recap some methods proposed in the literature for approximating the higher-order \Fd{}.

\subsection{Established methods for approximating the higher-order \Fd{}}\label{subsec:estalished_methods}

In~\cite{HighamRelton2014}, Higham and Relton prove the following theorem which relates the higher-order \Fd\ to the result of evaluating $f$ at a large, Kronecker-structured matrix.

\begin{theorem}[Theorem 3.5 in~\cite{HighamRelton2014}]\label{the:X_representation}
Let the largest Jordan block of $A\in \Cnn$ be of size $p \times p$ and let $\spec(A) \subseteq \calD$ where $\calD$ is an open subset of the complex plane. If $f: \calD \longrightarrow \C$ is $2^kp - 1$ times continuously differentiable on $\calD$, then 
\begin{itemize}
    \item[(i)] the $k$th-order \Fd\ $L_f^{(k)}(A, \cdot, \dots, \cdot)$ exists and is continuous in $A, E_1, \dots, E_k$,
    \item[(ii)] $L_f^{(k)}(A, E_1, \dots, E_k)$ is equal to the upper right $n \times n$ block of $f(X_k)$, where 
    \begin{equation}\label{eq:X}
        X_k = \begin{cases}
        A & k = 0 \\
        I_{2 \times 2} \otimes X_{k-1} + \left[\begin{array}{cc} 0 & 1 \\ 0 & 0\end{array}\right] \otimes I_{2^{k-1} \times 2^{k-1}} \otimes E_k & k \geq 1,
        \end{cases}
    \end{equation}
    and $\otimes$ denotes the Kronecker product.
\end{itemize}
\end{theorem}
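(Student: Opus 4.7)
The plan is to proceed by induction on $k$, with the base case $k=1$ being the classical Mathias--Daleckii--Krein identity stating that for $f$ with at least $2p-1$ continuous derivatives on $\calD$,
\[
f\!\left(\begin{bmatrix} A & E \\ 0 & A \end{bmatrix}\right) = \begin{bmatrix} f(A) & L_f(A,E) \\ 0 & f(A) \end{bmatrix},
\]
so that the upper-right $n \times n$ block of $f(X_1)$ is indeed $L_f(A, E_1)$. The differentiability requirement at this level is dictated by the fact that the $2\times 2$ augmented matrix can have Jordan blocks of size up to $2p$.

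For the inductive step, first I would rewrite the recursive definition~\eqref{eq:X} in the equivalent block form
\[
X_k = \begin{bmatrix} X_{k-1} & I_{2^{k-1}} \otimes E_k \\ 0 & X_{k-1} \end{bmatrix},
\]
together with the structural observation that $X_{k-1}$ contains $A$ as each of its $2^{k-1}$ diagonal $n \times n$ blocks, so that perturbing $A \mapsto A + E_k$ perturbs $X_{k-1} \mapsto X_{k-1} + I_{2^{k-1}} \otimes E_k$. Applying the $k=1$ identity \emph{at the level of} $X_{k-1}$ in direction $I_{2^{k-1}} \otimes E_k$ then expresses the first-order Fréchet derivative $L_f(X_{k-1}, I_{2^{k-1}} \otimes E_k)$ as the upper-right $2^{k-1}n \times 2^{k-1}n$ block of $f(X_k)$. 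Combining this with the inductive hypothesis---which identifies $L_f^{(k-1)}(A, E_1, \dots, E_{k-1})$ with the upper-right $n \times n$ block of $f(X_{k-1})$---and invoking the recursive definition of the higher-order \Fd\ along with the linearity of the projection onto the upper-right $n \times n$ block, one finds that $L_f^{(k)}(A, E_1, \dots, E_k)$ is the upper-right $n \times n$ sub-block of that $2^{k-1}n \times 2^{k-1}n$ block, which by the block triangular indexing coincides with the upper-right $n \times n$ block of $f(X_k)$ itself.

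Part~(i) on existence and continuity would run alongside the induction: $f(X_k)$ is defined as soon as $f$ has enough continuous derivatives to accommodate the largest Jordan block of $X_k$. An inductive inspection of the block triangular structure of $X_k$, whose diagonal carries $2^k$ copies of $A$ and whose strict upper-triangular blocks are built from the $E_j$, shows that its largest Jordan block has size at most $2^k p$, which matches the stated $2^k p - 1$ regularity requirement; continuity in all arguments then follows from continuity of $f(\cdot)$ on matrices of this class and of the projection onto the upper-right block. The main obstacle I anticipate is precisely this Jordan structure estimate: coupling the perturbation $I_{2^{k-1}} \otimes E_k$ across two copies of $X_{k-1}$ can, in the worst case, double the largest Jordan block size, and establishing that the factor of two is exactly the worst case at each level of the recursion (rather than settling for a cruder bound such as $2^k p^2$) requires tracking the nilpotent part of $X_k - I_{2^k} \otimes A$ under a suitable similarity, which is the most delicate bookkeeping step.
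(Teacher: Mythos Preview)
The paper does not prove this statement: Theorem~\ref{the:X_representation} is quoted verbatim as Theorem~3.5 of Higham and Relton~\cite{HighamRelton2014} and is used without proof in Section~\ref{subsec:estalished_methods}. There is therefore nothing in the present paper to compare your attempt against.

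That said, your inductive outline is essentially the argument given in the original Higham--Relton paper: reduce to the $k=1$ block identity, observe that $X_k=\begin{bmatrix} X_{k-1} & I_{2^{k-1}}\otimes E_k\\ 0 & X_{k-1}\end{bmatrix}$, note that perturbing $A\mapsto A+E_k$ amounts to $X_{k-1}\mapsto X_{k-1}+I_{2^{k-1}}\otimes E_k$, and then read off the top-right $n\times n$ block. The delicate point you flag---that the largest Jordan block of $X_k$ has size at most $2^k p$---is exactly the content of their Lemma~3.4, which they prove by a direct nilpotency argument on $X_k - I_{2^k}\otimes \lambda I$; your instinct that this is where the bookkeeping lives is correct, and the doubling bound is sharp. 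So your proposal is sound, but it recovers a result the paper under review merely imports.
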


Based on Theorem~\ref{the:X_representation}, Higham and Relton~\cite[Algorithm 3.6]{HighamRelton2014} propose to compute $f(X_k)$, with $X_k$ defined in~\eqref{eq:X}, and then extract its upper-right $n \times n$ block in order to obtain $L_f^{(k)}(A, E_1, \dots, E_k)$. As the matrix $X_k$ is of size $2^kn \times 2^kn$, this approach quickly becomes prohibitively expensive for growing $n$ and/or $k$. Assuming that an algorithm is available for evaluating $f$ at a matrix argument with complexity $\calO(n^3)$ (as is the case for many important matrix functions), evaluating $f(X_k)$ has complexity $\calO(8^k n^3)$. 

Recently, Al-Mohy and Arslan~\cite{AlMohyArslan2020} have investigated the use of the complex-step approximation for the higher-order \Fd; see also~\cite{Werner2021} for related work by Werner. In particular, they show that if $f$ is an analytic function that takes real values at real arguments,
\begin{equation}\label{eq:complex_step}
L_f^{(k)}(A, E_1, \dots, E_k) = \frac{1}{h} \Im \left(L_f^{(k-1)}(A+ihE_k,E_1,\dots,E_k)\right) + \mathcal{O}(h^2),
\end{equation}
where $\Im(M)$ denotes the (entrywise) imaginary part of a matrix $M$. Thus, defining $X_{k-1}(h)$ according to~\eqref{eq:X}, starting with $X_0 = A + ihE_k$ instead of $X_0 = A$, the upper right $n \times n$ block of $\frac{1}{h} \Im (f(X_{k-1}(h)))$ is an approximation for $L_f^{(k)}(A,E_1,\dots,E_k)$. Using this approach, $f$ needs to be evaluated at a square matrix of size $2^{k-1}n$ instead of $2^kn$, thus reducing the computational complexity by a factor of $8$. This comes at the cost of introducing complex arithmetic even if $A$ is real, though, which increases complexity by a factor of roughly $4$. Thus, for real $A$, this approach reduces the cost to about 50\% compared to the algorithm of Higham and Relton.
By using a mixed derivative approach, Al-Mohy and Arslan are able to further reduce the cost by a factor of $4$, i.e., to about 12.5\% of the cost of~\cite[Algorithm 3.6]{HighamRelton2014}. Using this approach requires choosing a second step size parameter though, and in~\cite[Experiment 2]{AlMohyArslan2020}, it is demonstrated that the quality of the approximation is quite sensitive to the choice of this second parameter and that substantial subtractive cancellation might occur, leading to numerical instabilities; the use of this approximation is thus not advocated in general.

A further noteworthy observation in~\cite{AlMohyArslan2020} is that it is actually unnecessary to compute the whole matrix $f(X_k)$ in all of the mentioned algorithms. Instead, defining the block-vector
\begin{equation*}\label{eq:B}
B = [0, \dots, 0, I]^T \in \R^{2^kn \times n},
\end{equation*}
where $0$ and $I$ denote the zero and identity matrix of size $n \times n$, respectively, it suffices to compute $f(X_k)B$ and extract its top $n \times n$ block. In cases were an efficient routine for approximating the action of $f(X_k)$ on a (block) vector is available (as is the case, e.g., for the exponential, using the algorithm by Al-Mohy and Higham~\cite[Algorithm 3.2]{AlMohyHigham2011}), this can lead to huge speed-ups for the computation of the higher-order \Fd---at least for large derivative order $k$---as demonstrated in~\cite[Experiment~1]{AlMohyArslan2020}.

\subsection{Approximating the higher-order \Fd\ by numerical quadrature}\label{subsec:quadrature_algorithm}
Using quadrature rules for approximating matrix functions or their action on vectors is a well-established and widely used technique; see, e.g.,~\cite{Cardoso2012, FrommerLundSzyld2017, Hale2008, TrefethenWeidemanSchmelzer2006, WeidemanTrefethen2007, FrommerGuettelSchweitzer2014a, Schweitzer2016} and the references therein. Thus, in light of the integral representations found in Section~\ref{sec:integral_representation}, it is a natural approach to also try to approximate the higher-order \Fd\ by quadrature rules. In the following, we briefly outline this approach and highlight a few computational aspects. In order to avoid unnecessary repetitions, we explain most concepts only for the Cauchy integral formula~\eqref{eq:cauchy_integral_fA}, as all formulas hold for Stieltjes and related representations with obvious modifications.

Applying an $m$-node quadrature rule with nodes $\zeta_j, j = 1,\dots,m$ and weights $w_j, j = 1,\dots, m$ to approximate the integral~\eqref{eq:integral_representation} yields an approximation
\begin{equation}\label{eq:frechet_quadrature}
L_f^{(k)}(A, E_1, \dots, E_k) \approx \widetilde{L}_m(A,E_1,\dots,E_k) := \sum\limits_{j = 1}^m\sum\limits_{\pi\in S_k} w_j f(\zeta_j) M_\pi(\zeta_j; A, E_1, \dots, E_k),
\end{equation}
 where the matrices $M_\pi(\zeta_j; A, E_1, \dots, E_k)$ are defined according to~\eqref{eq:integrand_M}. In contrast to the approaches of \cite{HighamRelton2014,AlMohyArslan2020}, only matrices of size $n \times n$ appear in~\eqref{eq:frechet_quadrature}, irrespective of the order $k$ of the \Fd. However, the number of terms in the inner sum is $|S_k| = k!$, which grows rapidly with $k$, suggesting that the use of~\eqref{eq:frechet_quadrature} might be particularly attractive for moderate values of $k$. 

To obtain an actual algorithm for a specific function $f$, it is necessary to choose the nodes $\zeta_j$ and weights $w_j$ of the quadrature rule. Fortunately, we can use the same quadrature rules in our setting as one would also use in algorithms for approximating $f(A)$ or $f(A)\vb$~\cite{Hale2008,TrefethenWeidemanSchmelzer2006,FrommerGuettelSchweitzer2014a}, as the following result guarantees. For brevity, we state it only for functions represented by the Cauchy integral formula and note that it holds in the same way for, e.g., Stieltjes functions. 
\begin{proposition}\label{prop:convergence_quadrature}
Let $f$ be representable by the Cauchy integral formula~\eqref{eq:cauchy_integral_fA}, let $\zeta_j, w_j, j = 1,\dots,m$ be the nodes and weights of an $m$-point quadrature rule for $f(A)$ and assume that for $\spec(A) \subseteq \Omega$, this quadrature rule converges as
\begin{equation*}\label{eq:geometric_convergence_assumption}
\left\|f(A) - \frac{1}{2\pi i} \sum_{j=1}^m w_j f(\zeta_j) (\zeta_j I-A)^{-1}\right\| = \mathcal{O}(q_{f, \Omega}(m)),
\end{equation*}
where $q_{f, \Omega}(m)$ is such that $\lim_{m\rightarrow\infty} q_{f, \Omega}(m) = 0$. Then the quadrature approximation $\widetilde{L}_m(A,E_1,\dots,E_k)$ defined in~\eqref{eq:frechet_quadrature} also fulfills
\begin{equation}\label{eq:geometric_convergence_result}
\|L_f^{(k)}(A, E_1,\dots,E_k) - \widetilde{L}_m(A,E_1,\dots,E_k)\| = \mathcal{O}(q_{f, \Omega}(m))
\end{equation}
when $\spec(A) \subseteq \Omega$.
\end{proposition}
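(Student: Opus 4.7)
The plan is to reduce the quadrature error bound for $L_f^{(k)}$ to the corresponding bound for $f(X_k)$, where $X_k$ is the $2^k n \times 2^k n$ block Kronecker matrix from Theorem~\ref{the:X_representation}, and then exploit that $X_k$ has exactly the same spectrum as $A$.

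First, I would apply Lemma~\ref{lem:kth_fd_resolvent} and part (ii) of Theorem~\ref{the:X_representation} to the function $g(z) = (\zeta - z)^{-1}$ in order to identify the upper-right $n \times n$ block of the resolvent $(\zeta I - X_k)^{-1}$ as $\sum_{\pi \in S_k} M_\pi(\zeta; A, E_1,\dots,E_k)$. Denoting by $\Pi$ the linear operator that picks out this upper-right $n \times n$ block, linearity allows me to commute $\Pi$ through both the contour integral and the quadrature sum, giving the clean representations
\begin{equation*}
L_f^{(k)}(A, E_1,\dots,E_k) = \Pi(f(X_k)), \qquad \widetilde{L}_m(A,E_1,\dots,E_k) = \Pi\!\left(\tfrac{1}{2\pi i}\sum_{j=1}^m w_j f(\zeta_j)(\zeta_j I - X_k)^{-1}\right),
\end{equation*}
where in the second identity the $\tfrac{1}{2\pi i}$ is absorbed into the weights $w_j$ as is done in~\eqref{eq:frechet_quadrature}.

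Second, because $\Pi$ simply extracts a submatrix, $\|\Pi(M)\| \le \|M\|$ for the spectral and Frobenius norms (and indeed for any unitarily invariant norm). Subtracting the two displays above and taking norms therefore yields
\begin{equation*}
\|L_f^{(k)}(A,E_1,\dots,E_k) - \widetilde{L}_m(A,E_1,\dots,E_k)\| \le \left\| f(X_k) - \tfrac{1}{2\pi i}\sum_{j=1}^m w_j f(\zeta_j)(\zeta_j I - X_k)^{-1} \right\|.
\end{equation*}
Third, the recursive definition~\eqref{eq:X} shows by induction that $X_k$ is block upper triangular with every diagonal block equal to $A$, hence $\spec(X_k) = \spec(A) \subseteq \Omega$. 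Invoking the quadrature convergence hypothesis with $A$ replaced by $X_k$ then bounds the right-hand side by $\mathcal{O}(q_{f,\Omega}(m))$, which is exactly~\eqref{eq:geometric_convergence_result}.

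The delicate point, and the one I would flag as the main obstacle, is the final step: the hypothesis is literally stated for the matrix $A$, whereas I need it applied to $X_k$. This is legitimate because all standard contour-based quadrature error estimates used in the references cited in Section~\ref{subsec:quadrature_algorithm} are derived from contour integral bounds that depend only on $f$ and on the containment $\spec(\cdot) \subseteq \Omega$, not on the particular matrix; thus $q_{f,\Omega}(m)$ serves as a uniform rate for the whole class of matrices with spectrum in $\Omega$. To be rigorous one should either spell out this uniformity in the statement of Proposition~\ref{prop:convergence_quadrature} or verify it for the specific quadrature rules under consideration.
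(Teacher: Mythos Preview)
Your proposal is correct and follows essentially the same route as the paper: apply the quadrature hypothesis to the block matrix $X_k$ (which has the same spectrum as $A$), then extract the upper-right $n\times n$ block using Theorem~\ref{the:X_representation} and Lemma~\ref{lem:kth_fd_resolvent}. Your explicit flagging of the uniformity issue (that the hypothesis must really hold for all matrices with spectrum in $\Omega$, not just the fixed $A$) is a valid observation that the paper's proof also implicitly relies on without comment.
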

\begin{proof}
Let $\spec(A) \subseteq \Omega$ and let $X_k$ be the matrix defined in~\eqref{eq:X}. The matrix $X_k$ is block upper triangular with all diagonal blocks equal to $A$, so that $\spec(X_k) \subseteq \Omega$ and therefore, by the assumption~\eqref{eq:geometric_convergence_assumption}, we have
\begin{equation}\label{eq:geometric_convergence_proof}
\left\|f(X_k) - \frac{1}{2\pi i} \sum_{j=1}^m w_j f(\zeta_j) (\zeta_j I-X_k)^{-1}\right\| = \mathcal{O}(q_{f, \Omega}(m)).
\end{equation}
Clearly, as~\eqref{eq:geometric_convergence_proof} holds for the complete matrix $f(X_k)$, it holds in particular for any sub-block. According to Theorem~\ref{the:X_representation}, the upper right $n \times n$ block of $f(X_k)$ is exactly equal to $L_f(A, E_1,\dots, E_k)$. In the same way, the upper right $n \times n$ block of $(\zeta_j I-X_k)^{-1}$ equals $L_{(\zeta_j-z)^{-1}}(A,E_1,\dots,E_k)$. Thus, from Lemma~\ref{lem:kth_fd_resolvent}, we  can conclude that the upper right block of $\frac{1}{2\pi i} \sum_{j=1}^m w_j f(\zeta_j) (\zeta_j I-X_k)^{-1}$ 
agrees with $\widetilde{L}_m(A,E_1,\dots,E_k)$. Therefore, by restricting~\eqref{eq:geometric_convergence_proof} to the upper right $n \times n$ block, the assertion follows.\hfill\qed
\end{proof}

\begin{remark}\label{rem:quadrature_convergence}
Often, quadrature rules for matrix functions converge geometrically. For example, for $f(z) = \exp(z), \Omega = (-\infty, 0]$ and quadrature nodes and weights on an optimized parabolic contour as considered in~\cite{TrefethenWeidemanSchmelzer2006} yields $q_{f, \Omega}(m) = 2.85^{-m}$, while for $f(z) = \sqrt{z}, \Omega = [\alpha, \beta] \subseteq (0, \infty)$ one obtains $q_{f, \Omega}(m) = e^{-\pi^2 m / (\log (\beta/\alpha)+3)}$ by using the quadrature rule labeled ``Method 1'' in~\cite{Hale2008}.
\end{remark}

In order to not make this section too unfocused, a discussion of important matrix function quadrature rules (and in particular the rules used in our numerical experiments) can be found in~\ref{sec:appendix_quadrature}.

\subsection{Efficiently implementing the quadrature approximation}\label{subsec:efficient_implementation}
How to evaluate~\eqref{eq:frechet_quadrature} most efficiently depends on the properties of the matrix $A$ and the direction terms $E_1, \dots, E_k$ (and of course also on other factors like the hardware architecture, a topic that is well beyond the scope of this manuscript). In cases where maximum efficiency is important, the implementation can and should therefore be specifically optimized with regard to the circumstances. In the following, we exemplarily cover two important situations.
\paragraph{When $A$ and $E_1,\dots,E_k$ are dense and unstructured}
The first situation we consider is that both $A$ and $E_1,\dots,E_k$ do not have any structure that can be exploited, i.e., we assume that they are dense, of (almost) full rank and possibly non-Hermitian. Still, by exploiting the structure of the terms $M_\pi(\zeta_j; A, E_1,\dots,E_k)$ occurring in the quadrature rule, we can save computational effort over a naive implementation, in particular concerning the repeated application of the resolvents $(\zeta_j I - A)^{-1}$. To do so, for each node $\zeta_j$, we compute an LU decomposition\added{ (with partial pivoting)}
\begin{equation*}\label{eq:LU}
\added{P_j(}\zeta_j I - A\added{)} = L_jU_j,
\end{equation*}
upfront and then form the auxiliary matrices
\begin{equation*}\label{eq:E_tilde}
\widetilde{E}_i^{(j)} = L_j^{-1} \added{P_j}E_i U_j^{-1}, \quad i = 1,\dots, k.    
\end{equation*}
We then have
\begin{eqnarray*}
M_\pi(\zeta_j; A, E_1,\dots,E_k) &=& (\zeta_j I - A)^{-1}E_{\pi(1)}(\zeta_j I - A)^{-1}E_{\pi(2)}(\zeta_j I - A)^{-1}\cdots E_{\pi(k)}(\zeta_j I - A)^{-1} \nonumber\\
&=& U_j^{-1}L_j^{-1}\added{P_j}E_{\pi(1)}U_j^{-1}L_j^{-1}\added{P_j}E_{\pi(2)}\cdots L_j^{-1}\added{P_j}E_{\pi(k)}U_j^{-1}L_j^{-1}\added{P_j} \nonumber\\
&=& U_j^{-1} \widetilde{E}_{\pi(1)}^{(j)}\widetilde{E}_{\pi(2)}^{(j)} \cdots\widetilde{E}_{\pi(k)}^{(j)}L_j^{-1}\added{P_j}. \label{eq:product_E_tilde}
\end{eqnarray*}
This way, the number of matrix products that need to be computed for each of the terms in the inner sum in~\eqref{eq:frechet_quadrature} is reduced and the effort for computing the action of the resolvent is only performed once for each $E_i$. Using this approach, the evaluation of~\eqref{eq:frechet_quadrature} with $m$ quadrature nodes has a computational complexity of $\mathcal{O}(mn^3k!)$.

\begin{remark}
Of course, in the approach outlined above, intermediate matrix products $\widetilde{E}_{\pi(1)}^{(j)} \cdot \widetilde{E}_{\pi(2)}^{(j)} \cdot{}\dots{}\cdot \widetilde{E}_{\pi(\ell)}^{(j)}$ of length $\ell < k$ are required several times in the computation of all terms in the inner sum in~\eqref{eq:frechet_quadrature}. Depending on the available memory and the values of $n$ and $k$, one can compute all of these intermediate products just once, store them, and reuse them in a dynamic programming fashion. However, as the number of intermediate terms grows rapidly with $k$, this approach can only be used for moderate values of $k$ and we do not pursue this further here. One could also think of only partially using this approach, e.g., precomputing and reusing only the $k(k-1)$ products of length $\ell = 2$.
\end{remark}

\paragraph{Rank-one direction terms}
Another very important special case is that the direction matrices $E_1, \dots, E_k$ have rank one, or, even more specifically, that the direction terms are outer products of canonical unit vectors,  $E_i = \ve_{\alpha_i}\ve_{\beta_i}^T$ for some indices $\alpha_i, \beta_i, i = 1,\dots,k$. This situation occurs, e.g., when computing the Kronecker form of the $k$th \Fd; see~\cite[Algorithm 4.2]{HighamRelton2014}. Let us denote
\begin{equation}\label{eq:vectors_rank1}
\va_i = (\zeta_j I - A)^{-1}\ve_{\alpha_i} \quad\text{ and }\quad \vb_i = (\zeta_j I - A)^{-H}\ve_{\beta_i}, i = 1,\dots, k.
\end{equation}
Then, by writing
\begin{eqnarray}
M_\pi(\zeta; A, E_1,\dots,E_k)  &=& (\zeta I - A)^{-1}\ve_{\alpha_{\pi(1)}}\ve_{\beta_{\pi(1)}}^T(\zeta I - A)^{-1}\ve_{\alpha_{\pi(2)}}\ve_{\beta_{\pi(2)}}^T(\zeta I - A)^{-1}\cdots \ve_{\alpha_{\pi(k)}}\ve_{\beta_{\pi(k)}}^T(\zeta I - A)^{-1} \nonumber\\
&=& \left((\vb_{\pi(1)}^H\ve_{\pi(2)}) \cdot (\vb_{\pi(2)}^H\ve_{\pi(3)}) \cdot{}\dots{}\cdot (\vb_{\pi(k-1)}^H\ve_{\pi(k)}) \right) \cdot \va_{\pi(1)}\vb_{\pi(k)}^H,\label{eq:inner_and_outer_products}
\end{eqnarray}
one sees that $M_\pi(\zeta; A, E_1,\dots,E_k)$ is also a rank-one matrix. Computing the product in the form~\eqref{eq:product_E_tilde} would not fully exploit this structure. Instead, it is preferable to precompute the vectors $\va_1,\dots,\va_k$ and $\vb_1, \dots, \vb_k$ from~\eqref{eq:vectors_rank1} and then use those to form $M_\pi(\zeta_j; A, E_1,\dots,E_k)$. Having already solved the linear systems, the additional work needed to form a single matrix $M_\pi(\zeta_j; A, E_1,\dots,E_k)$ then amounts to computing $k-1$ inner products and one outer product, yielding a computational complexity of roughly $n^2 + 2(k-1)n$ operations. The overall complexity (ignoring lower order terms) of the resulting method is then given by
\begin{equation}\label{eq:complexity_rank_one}
    m\cdot (2k\cdot\texttt{solve}(n) + n^2k!),
\end{equation}
where $\texttt{solve}(n)$ denotes the complexity of solving a linear system with (a shifted version of) $A$ or $A^H$. If $\texttt{solve}(n) = \mathcal{O}(n^2)$, the overall algorithm then scales as $n^2$ for fixed $k$ and $m$. A typical situation for which this is the case is if $A$ is banded (with bandwidth $\gamma$ independent of $n$), so that the linear systems~\eqref{eq:vectors_rank1} can be solved with complexity $\mathcal{O}(\gamma^2 n)$ by a sparse direct solver.

\begin{figure}
\tikzsetnextfilename{experiment_sparseAsparseE}
\pgfplotsset{height=0.4\linewidth,width=0.46\linewidth,compat=1.10,every axis/.append style={legend style={/tikz/every even column/.append style={column sep=6pt}}}}
\pgfplotsset{every tick label/.append style={font=\small}}

\noindent%
\begin{tikzpicture}[scale=1]%
    \begin{semilogyaxis}[legend style={at={(0.275,0.975)}}, 
   	anchor=north east, legend columns=1,cycle list name=list_std, 
   	xmin=1.8, xmax=9.2,grid=major, 
   	xlabel={\small $k$}, ylabel={\small time [s]}, 
   	title={\small $n = 50$ fixed}]

\addplot+[]
table [x ={k},y ={HR}] {figs/dat/experiment_tridiagonal_rankone_k.dat};\addlegendentry{\small  \texttt{HR}}
\addplot+[]
table [x ={k},y ={AA}] {figs/dat/experiment_tridiagonal_rankone_k.dat};\addlegendentry{\small  \texttt{AA}}
\addplot+[]
table [x ={k},y ={Q}] {figs/dat/experiment_tridiagonal_rankone_k.dat};\addlegendentry{\small  \texttt{Q}}

    \end{semilogyaxis}
\end{tikzpicture}
\hspace{.3cm}
\begin{tikzpicture}[scale=1]%
    \begin{semilogyaxis}[legend style={at={(0.275,0.975)}}, 
   	anchor=north east, legend columns=1,cycle list name=list_std, 
   	xmin=40, xmax=360,grid=major, 
   	xlabel={\small $n$}, ylabel={\small time [s]}, 
   	title={\small $k = 4$ fixed}]

\addplot+[]
table [x ={n},y ={HR}] {figs/dat/experiment_tridiagonal_rankone_n.dat};\addlegendentry{\small  \texttt{HR}}
\addplot+[]
table [x ={n},y ={AA}] {figs/dat/experiment_tridiagonal_rankone_n.dat};\addlegendentry{\small  \texttt{AA}}
\addplot+[]
table [x ={n},y ={Q}] {figs/dat/experiment_tridiagonal_rankone_n.dat};\addlegendentry{\small  \texttt{Q}}
\addplot+[dashed, black,mark=none]
table [x ={n},y ={est}] {figs/dat/experiment_tridiagonal_rankone_n.dat};
\addplot+[dotted, black,mark=none]
table [x ={n},y ={est2}] {figs/dat/experiment_tridiagonal_rankone_n.dat};

    \end{semilogyaxis}
\end{tikzpicture}
\cprotect\caption{Results of Experiment~\ref{ex:lesp_sparse}: Comparison of run-time of the different algorithms for $n = 50$ and varying $k$ (left) and for $k = 4$ and varying $n$ (right). In both cases, the matrix $A \in \Rnn$ is the tridiagonal matrix generated by the command \verb|A = gallery('lesp',n)| and the matrices $E_1, \dots, E_k$ are outer products of canonical unit vectors. The dotted and dashed line in the right plot indicate quadratic and cubic scaling with respect to $n$, respectively.}\label{fig:experiment_sparseAsparseE}
\end{figure}
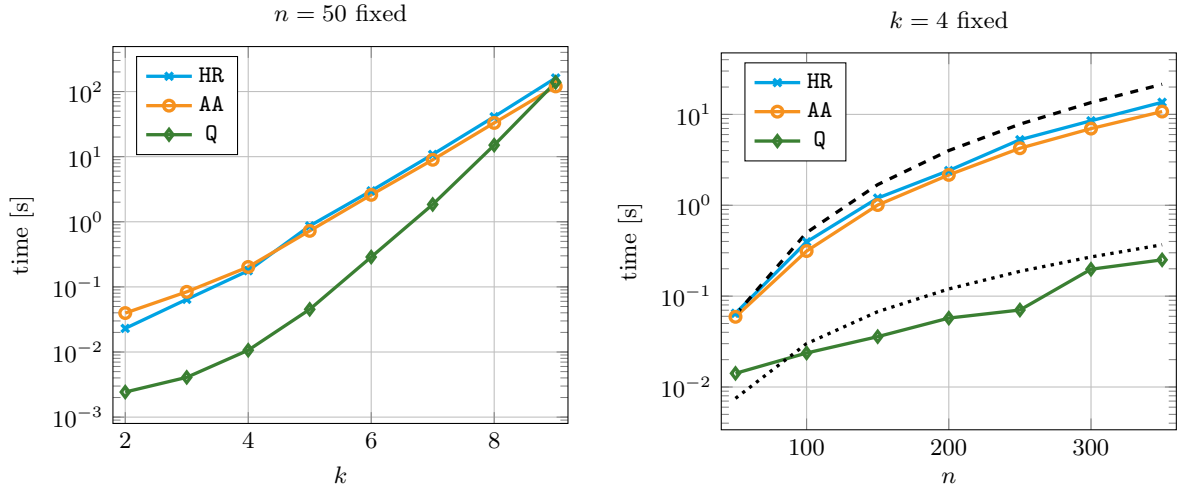

\section{Numerical experiments}\label{sec:numerical_experiments}
In this section, we compare the quadrature-based algorithm proposed in Section~\ref{subsec:quadrature_algorithm} to the algorithms from~\cite{HighamRelton2014, AlMohyArslan2020} which were briefly discussed in Section~\ref{subsec:estalished_methods}. All experiments are carried out in MATLAB R2021b on a PC with an AMD Ryzen 7 3700X 8-core CPU and 32 GB RAM. To make run time comparisons more reliable, we use the option \texttt{maxNumCompThreads = 1} so that MATLAB is limited to a single computational thread. 
The algorithms that we compare are listed in the following. 
\begin{itemize}
    \item \texttt{HR}: The original algorithm by Higham and Relton from~\cite{HighamRelton2014}, based on Theorem~\ref{the:X_representation}(ii).
    \item \texttt{AA}: The complex-step approximation algorithm by Arslan and Al-Mohy~\cite{AlMohyArslan2020}, based on~\eqref{eq:complex_step}.
    \item \texttt{Q}: The approximation based on the numerical quadrature formula~\eqref{eq:frechet_quadrature}.
\end{itemize}
In our first two experiments, we choose $f(z) = \exp(z)$, which allows us to use the efficient method \texttt{expmv}~\cite{AlMohyHigham2011}\footnote{available at \url{https://github.com/higham/expmv}} to compute the action of the matrix exponential on a (block) vector in the algorithms \texttt{HR} and \texttt{AA} that we compare against. In the quadrature based algorithm \texttt{Q} we choose $\Gamma$ as the parabolic contour~\eqref{eq:parabolic_contour}.

\begin{experiment}\label{ex:lesp_sparse}
In this first experiment, we use a similar setup to that of~\cite[Experiment~1]{AlMohyArslan2020}: We generate $A \in \Rnn$ by the MATLAB command \verb|A = gallery('lesp',n)|, which results in a tridiagonal matrix with smoothly distributed eigenvalues in the interval $[-(3.5+2n), -4.5]$. The matrices $E_i$ are chosen each as outer product of two canonical unit vectors, i.e., $E_i = \ve_{\alpha_i}\ve_{\beta_i}^T$ for random $\alpha_i,\beta_i \in \{1,\dots,n\}$. We are therefore in the situation outlined in the paragraph ``Rank-one direction terms'' of Section~\ref{subsec:efficient_implementation} and accordingly evaluate $M_\pi(\zeta; A, E_1,\dots,E_k)$ using~\eqref{eq:vectors_rank1}--\eqref{eq:inner_and_outer_products}.

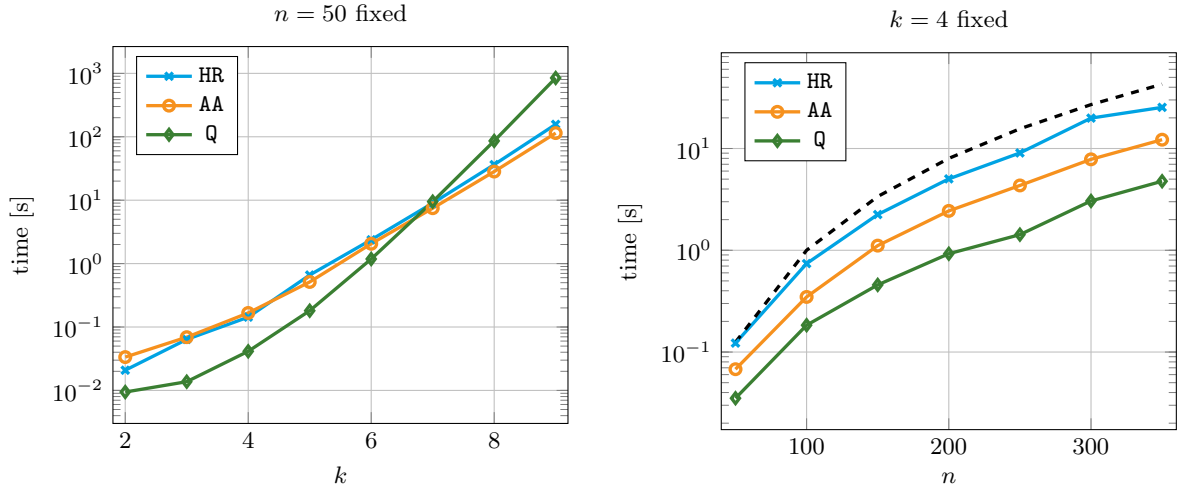
\begin{figure}
\tikzsetnextfilename{experiment_fullAfullE}
\pgfplotsset{height=0.4\linewidth,width=0.46\linewidth,compat=1.10,every axis/.append style={legend style={/tikz/every even column/.append style={column sep=6pt}}}}
\pgfplotsset{every tick label/.append style={font=\small}}

\noindent%
\begin{tikzpicture}[scale=1]%
    \begin{semilogyaxis}[legend style={at={(0.275,0.975)}}, 
   	anchor=north east, legend columns=1,cycle list name=list_std, 
   	xmin=1.8, xmax=9.2,grid=major,	xlabel={\small $k$}, ylabel={\small time [s]}, 
   	title={\small $n = 50$ fixed}]

\addplot+[]
table [x ={k},y ={HR}] {figs/dat/experiment_unstructured_k.dat};\addlegendentry{\small \texttt{HR}}
\addplot+[]
table [x ={k},y ={AA}] {figs/dat/experiment_unstructured_k.dat};\addlegendentry{\small  \texttt{AA}}
\addplot+[]
table [x ={k},y ={Q}] {figs/dat/experiment_unstructured_k.dat};\addlegendentry{\small  \texttt{Q}}

    \end{semilogyaxis}
\end{tikzpicture}
\hspace{.3cm}
\begin{tikzpicture}[scale=1]%
    \begin{semilogyaxis}[legend style={at={(0.275,0.975)}}, 
   	anchor=north east, legend columns=1,cycle list name=list_std, 
   	xmin=40, xmax=360,grid=major, 
   	xlabel={\small $n$}, ylabel={\small time [s]}, 
   	title={\small $k = 4$ fixed}]

\addplot+[]
table [x ={n},y ={HR}] {figs/dat/experiment_unstructured_n.dat};\addlegendentry{\small  \texttt{HR}}
\addplot+[]
table [x ={n},y ={AA}] {figs/dat/experiment_unstructured_n.dat};\addlegendentry{\small  \texttt{AA}}
\addplot+[]
table [x ={n},y ={Q}] {figs/dat/experiment_unstructured_n.dat};\addlegendentry{\small  \texttt{Q}}
\addplot+[dashed, black,mark=none]
table [x ={n},y ={est}] {figs/dat/experiment_unstructured_n.dat};

    \end{semilogyaxis}
\end{tikzpicture}
\caption{Comparison of run-time of the different algorithms for $n = 50$ and varying $k$ (left) and for $k = 4$ and varying $n$ (right). In both cases, the matrix $A \in \Rnn$ is a dense matrix having the same eigenvalues as the matrix used in the previous experiment and the matrices $E_1, \dots, E_k$ have normally distributed random entries. The dashed line in the right plot indicates cubic scaling with respect to $n$. }\label{fig:experiment_fullAfullE}
\end{figure}

We perform two series of experiments. First we fix the matrix size $n = 50$ and compute the $k$th order \Fd\ for $k = 2, \dots, 9$, then we fix $k = 4$ and compute the fourth-order \Fd\ for matrices with sizes ranging from $n = 50$ to $n = 350$. We always use $m = 40$ quadrature nodes in method \texttt{Q}, which is sufficient to reach a relative approximation error in the order of machine precision for all considered problem sizes. When fixing $n$ and increasing $k$, we use \texttt{expmv} in algorithms \texttt{HR} and \texttt{AA}, while for moderate $k$ and increasing $n$, we compute the full matrix function $\exp(X_k)$ using the built-in MATLAB function \texttt{expm}, which turns out to be more efficient (see also~\cite{AlMohyArslan2020}, where the same observation was made).

Results for varying $k$ are reported on the left-hand side of Figure~\ref{fig:experiment_sparseAsparseE}. We observe that for $k= 2,\dots,8$, \texttt{Q} shows the best performance, but also that it shows the worst scaling behavior, clearly illustrating the influence of the factorial term in the complexity bound, making it the worst performing method for $k = 9$. The algorithms \texttt{HR} and \texttt{AA} show comparable performance, with \texttt{AA} being slightly faster for larger values of $k$. 

The results for varying $n$ and fixed $k$ are depicted on the right-hand side of Figure~\ref{fig:experiment_sparseAsparseE}. Here, \texttt{Q} is the fastest algorithm in all cases, showing that it scales best with increasing $n$ when the value of $k$ is moderate. As in the first part of the experiment, \texttt{HR} and \texttt{AA} show comparable performance (again with a slight advantage for \texttt{AA}). Linear systems with $A$ can be solved with complexity $\mathcal{O}(n)$, as $A$ is tridiagonal, so we expect the complexity of \texttt{Q} to scale as $\mathcal{O}(n^2)$ for fixed $k$; cf.~\eqref{eq:complexity_rank_one}, while the complexity of \texttt{HR} and \texttt{AA} scales as $\mathcal{O}(n^3)$. These theoretical considerations are clearly confirmed by the result of the experiment, also illustrated by the dotted and dashed lines indicating quadratic and cubic scaling, respectively. 
\end{experiment}

\begin{experiment}\label{ex:full}
In our second experiment, we test the algorithms in a situation where both $A$ and $E_i, i = 1,\dots,k$ are dense and unstructured. To make the results easily comparable to those of Experiment~\ref{ex:lesp_sparse}, we choose $A$ such that it has the same eigenvalues as the matrix generated by the MATLAB command \verb|A = gallery('lesp',n)|. To so, we apply a random orthogonal similarity transformation to this matrix, which results in a completely unstructured matrix. The matrices $E_i$ are dense matrices with normally distributed random entries, generated by the MATLAB command \texttt{E = randn(n)}.

Apart from the choice of $A$ and $E_i$, the experimental setup is exactly the same as in Experiment~\ref{ex:lesp_sparse}, i.e., we again perform two series of experiments, one for fixed $n$ and one for fixed $k$. The results of this experiment are reported in Figure~\ref{fig:experiment_fullAfullE}. For varying $k$, the observations are very similar to those from the previous experiment, as the cost of the algorithms is dominated by the scaling with respect to $k$, so that the structural differences in $A$ and $E_1,\dots,E_k$ do not heavily influence the results. Starting from $k = 7$, the quadrature based algorithm \texttt{Q} is again the least efficient, and \texttt{AA} turns out to be slightly more efficient than \texttt{HR}. For varying $n$, \texttt{HR} and \texttt{A\added{A}} behave very similarly to what we have observed in the previous experiment, as they do not exploit structure in $A$ or $E_1,\dots,E_k$ in any way. In contrast, our new algorithm \texttt{Q} performs much worse than before. This is in line with the analysis performed in Section~\ref{subsec:efficient_implementation}: As predicted there, the algorithm also scales cubically with respect to $n$ when there is no exploitable structure in the matrices. Still, it is the most efficient of the methods for all considered problem sizes, being about 6 times faster than \texttt{HR} and about 2.5 times faster than \texttt{AA}.
\end{experiment}

\begin{experiment}\label{ex:anymatrix}
\begin{figure}
\centering
\tikzsetnextfilename{anymatrix_accuracy}
\pgfplotsset{height=0.4\linewidth,width=0.875\linewidth,compat=1.10,every axis/.append style={legend style={/tikz/every even column/.append style={column sep=6pt}}}}
\pgfplotsset{every tick label/.append style={font=\small}}

\noindent%
\begin{tikzpicture}[scale=1]%
    \begin{semilogyaxis}[legend style={at={(.99,0.3)}}, 
   	anchor=south east, legend columns=1,cycle list name=list_mod3, 
   	xmin=0.5, xmax=27.5,grid=major, 
   	xlabel={}, ylabel={relative accuracy}, 
   	title={Comparison of accuracy of different algorithms},
   	xticklabels={\texttt{\scriptsize gallery/wathen},
\texttt{\scriptsize gallery/jordbloc},
\texttt{\scriptsize gallery/kms},
\texttt{\scriptsize gallery/parter},
\texttt{\scriptsize contest/pagerank},
\texttt{\scriptsize core/rschur},
\texttt{\scriptsize core/kms\_nonsymm},
\texttt{\scriptsize gallery/condex},
\texttt{\scriptsize gallery/grcar},
\texttt{\scriptsize core/hessmaxdet},
\texttt{\scriptsize gallery/pei},
\texttt{\scriptsize gallery/poisson},
\texttt{\scriptsize core/wilson},
\texttt{\scriptsize gallery/hanowa},
\texttt{\scriptsize gallery/invhess},
\texttt{\scriptsize gallery/gcdmat},
\texttt{\scriptsize gallery/minij},
\texttt{\scriptsize gallery/lesp},
\texttt{\scriptsize core/symmstoch},
\texttt{\scriptsize gallery/lehmer},
\texttt{\scriptsize gallery/randcorr},
\texttt{\scriptsize gallery/tridiag},
\texttt{\scriptsize gallery/dorr},
\texttt{\scriptsize contest/mht},
\texttt{\scriptsize regtools/deriv2},
\texttt{\scriptsize gallery/kahan},
\texttt{\scriptsize gallery/wilk}},xtick={1,...,29}, x tick label style={rotate=45,anchor=east}]

\addplot+[only marks] table [x ={ind},y ={acc_AA}] {figs/dat/test_anymatrix_invsqrt.dat};\addlegendentry{\small \texttt{AA}}
\addplot+[only marks] table [x ={ind},y ={acc_Q32}] {figs/dat/test_anymatrix_invsqrt.dat};\addlegendentry{\small \texttt{Q}, $m = 32$}
\addplot+[only marks] table [x ={ind},y ={acc_Q96}] {figs/dat/test_anymatrix_invsqrt.dat};\addlegendentry{\small \texttt{Q}, $m = 96$}

\end{semilogyaxis}
\end{tikzpicture}

\tikzsetnextfilename{anymatrix_time}
\pgfplotsset{height=0.4\linewidth,width=0.875\linewidth,compat=1.10,every axis/.append style={legend style={/tikz/every even column/.append style={column sep=6pt}}}}
\pgfplotsset{every tick label/.append style={font=\small}}

\noindent%
\begin{tikzpicture}[scale=1]%
    \begin{axis}[legend style={at={(.99,0.795)}}, 
   	anchor=north east, legend columns=1,cycle list name=list_std, 
   	xmin=0.5, xmax=27.5,grid=major, 
   	xlabel={}, ylabel={time [s]}, 
   	title={Comparison of run time of different algorithms},
   	xticklabels={\texttt{\scriptsize gallery/wathen},
\texttt{\scriptsize gallery/jordbloc},
\texttt{\scriptsize gallery/kms},
\texttt{\scriptsize gallery/parter},
\texttt{\scriptsize contest/pagerank},
\texttt{\scriptsize core/rschur},
\texttt{\scriptsize core/kms\_nonsymm},
\texttt{\scriptsize gallery/condex},
\texttt{\scriptsize gallery/grcar},
\texttt{\scriptsize core/hessmaxdet},
\texttt{\scriptsize gallery/pei},
\texttt{\scriptsize gallery/poisson},
\texttt{\scriptsize core/wilson},
\texttt{\scriptsize gallery/hanowa},
\texttt{\scriptsize gallery/invhess},
\texttt{\scriptsize gallery/gcdmat},
\texttt{\scriptsize gallery/minij},
\texttt{\scriptsize gallery/lesp},
\texttt{\scriptsize core/symmstoch},
\texttt{\scriptsize gallery/lehmer},
\texttt{\scriptsize gallery/randcorr},
\texttt{\scriptsize gallery/tridiag},
\texttt{\scriptsize gallery/dorr},
\texttt{\scriptsize contest/mht},
\texttt{\scriptsize regtools/deriv2},
\texttt{\scriptsize gallery/kahan},
\texttt{\scriptsize gallery/wilk}},xtick={1,...,29}, x tick label style={rotate=45,anchor=east}]

\addplot+[only marks] table [x ={ind},y ={time_HR}] {figs/dat/test_anymatrix_invsqrt.dat};\addlegendentry{\small \texttt{HR}}
\addplot+[only marks] table [x ={ind},y ={time_AA}] {figs/dat/test_anymatrix_invsqrt.dat};\addlegendentry{\small \texttt{AA}}
\addplot+[only marks] table [x ={ind},y ={time_Q32}] {figs/dat/test_anymatrix_invsqrt.dat};\addlegendentry{\small \texttt{Q}, $m = 32$}
\addplot+[only marks] table [x ={ind},y ={time_Q96}] {figs/dat/test_anymatrix_invsqrt.dat};\addlegendentry{\small \texttt{Q}, $m = 96$}

\end{axis}
\end{tikzpicture}
\caption{Accuracy and run time comparison for different, matrices from the \texttt{anymatrix} collection, many of which are non-normal. We approximate $L_{z^{-1/2}}(A, E_1, \dots, E_k)$ with $k = 4$ and $n = 300$ (for those matrices which are freely scalable in size). Details on the test matrices are reported in~\ref{sec:appendix_testset}}.\label{fig:experiment_anymatrix}
\end{figure}
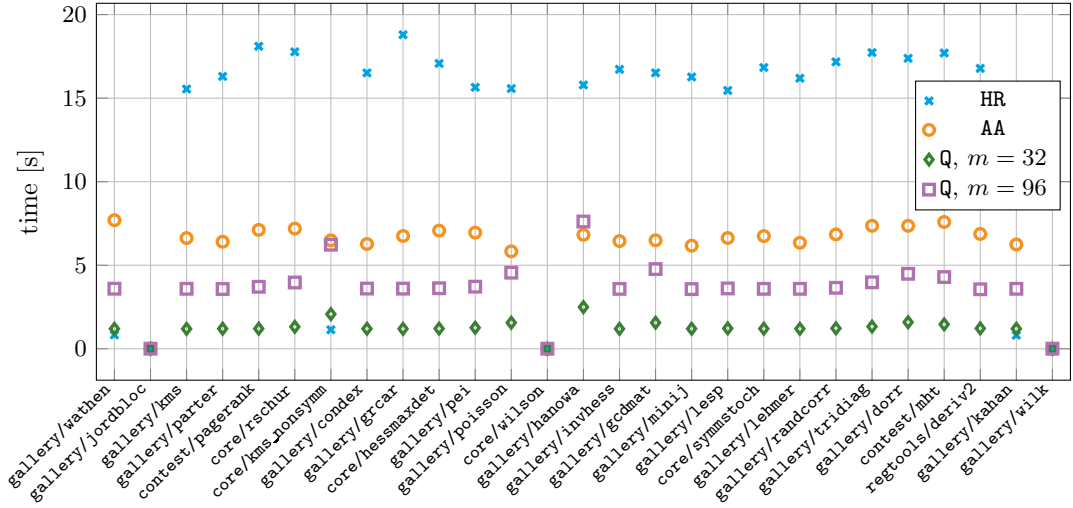
In our last experiment, we test the accuracy of our method on a wide variety of matrices from the \texttt{anymatrix} collection~\cite{Anymatrix}, \replaced{many}{most} of which are non-Hermitian and non-normal. See~\ref{sec:appendix_testset} for details on the data set that is used.

The function we are interested in is $f(z)=z^{-1/2}$ and in the quadrature-based algorithm, we use Gauss-Jacobi quadrature applied to the Stieltjes representation of $z^{-1/2}$ after applying a variable transformation that maps the positive real axis to $[-1,1]$; see~\ref{sec:appendix_quadrature} for details. \replaced{In the first part of the experiment, we}{We always} use $k = 4$ and construct $E_1, \dots, E_k$ as matrices with normally distributed random entries. For matrices with scalable sizes, we choose $n=300$. Details on the exact matrix sizes can be found in Table~\ref{tab:anymatrix}. To also illustrate how the number of quadrature nodes influences performance, we test our quadrature algorithm with two different numbers of quadrature nodes, $m = 32$ and $m = 96$. The results of this experiment are summarized in Figure~\ref{fig:experiment_anymatrix}. The top plot reports the relative accuracy
\begin{equation*}
    \delta := \frac{\|L_{z^{-1/2}}(A, E_1, \dots, E_k) - \widetilde{L}\|_F}{\|L_{z^{-1/2}}(A, E_1, \dots, E_k)\|_F},
\end{equation*}
where $\widetilde{L}$ can either be the approximation from the \texttt{AA} algorithm or from our quadrature-based algorithm. We use the result returned by the algorithm \texttt{HR} as ground truth solution $L_{z^{-1/2}}(A, E_1, \dots, E_k)$. To be able to do this, we removed some highly ill-conditioned matrices from the data set for which it also does not produce an accurate solution, see~\ref{sec:appendix_testset} for details. We can observe that for $m = 32$ quadrature nodes, our algorithm reaches---on average---a comparable accuracy to \texttt{AA}. For some matrices, it produces much better results, while it is also sometimes worse by several orders of magnitude. With $m = 96$ quadrature nodes, the results are much more accurate than those of \texttt{AA} for most matrices, except for a few rather ill-conditioned matrices (for which \texttt{AA} also reaches lower accuracy than for the other examples). The run times of the different algorithms (now also including \texttt{HR}) are depicted in the bottom plot of Figure~\ref{fig:experiment_anymatrix}. We can observe results that are very similar to those of previous experiments. Algorithm \texttt{HR} requires the most time, while our quadrature algorithm outperforms \texttt{AA} by factors of about 2-3 (for $m=96$) or about $5-6$ (for $m = 32$). Note that three test matrices have a very small size (and are not scalable), explaining the very low run times reported for them.

\begin{figure}
\centering
\tikzsetnextfilename{anymatrix_accuracy2}
\pgfplotsset{height=0.4\linewidth,width=0.875\linewidth,compat=1.10,every axis/.append style={legend style={/tikz/every even column/.append style={column sep=6pt}}}}
\pgfplotsset{every tick label/.append style={font=\small}}

\noindent%
\begin{tikzpicture}[scale=1]%
    \begin{semilogyaxis}[legend style={at={(.99,0.3)}}, 
   	anchor=south east, legend columns=1,cycle list name=list_mod3, 
   	xmin=0.5, xmax=13.5,grid=major, 
   	xlabel={}, ylabel={relative accuracy}, 
   	title={Comparison of accuracy of different algorithms},
   	xticklabels={\texttt{\scriptsize core/rschur},
    \texttt{\scriptsize gallery/hanowa},
    \texttt{\scriptsize gallery/jordbloc},
    \texttt{\scriptsize gallery/lesp},
    \texttt{\scriptsize core/kms\_nonsymm},
    \texttt{\scriptsize gallery/grcar},
    \texttt{\scriptsize contest/pagerank},
    \texttt{\scriptsize gallery/parter},
    \texttt{\scriptsize core/hessmaxdet},
    \texttt{\scriptsize gallery/invhess},
    \texttt{\scriptsize contest/mht},
    \texttt{\scriptsize gallery/dorr},
    \texttt{\scriptsize gallery/kahan}},xtick={1,...,29}, x tick label style={rotate=45,anchor=east}]

\addplot+[only marks] table [x ={ind},y ={acc_AA}] {figs/dat/test_anymatrix_invsqrt2.dat};\addlegendentry{\small \texttt{AA}}
\addplot+[only marks] table [x ={ind},y ={acc_Q32}] {figs/dat/test_anymatrix_invsqrt2.dat};\addlegendentry{\small \texttt{Q}, $m = 32$}
\addplot+[only marks] table [x ={ind},y ={acc_Q96}] {figs/dat/test_anymatrix_invsqrt2.dat};\addlegendentry{\small \texttt{Q}, $m = 96$}

\end{semilogyaxis}
\end{tikzpicture}

\tikzsetnextfilename{anymatrix_time2}
\pgfplotsset{height=0.4\linewidth,width=0.875\linewidth,compat=1.10,every axis/.append style={legend style={/tikz/every even column/.append style={column sep=6pt}}}}
\pgfplotsset{every tick label/.append style={font=\small}}

\noindent%
\begin{tikzpicture}[scale=1]%
    \begin{axis}[legend style={at={(.99,0.975)}}, 
   	anchor=north east, legend columns=1,cycle list name=list_std, 
   	xmin=0.5, xmax=13.5,grid=major, 
   	xlabel={}, ylabel={time [s]}, 
   	title={Comparison of run time of different algorithms},
   	xticklabels={\texttt{\scriptsize core/rschur},
    \texttt{\scriptsize gallery/hanowa},
    \texttt{\scriptsize gallery/jordbloc},
    \texttt{\scriptsize gallery/lesp},
    \texttt{\scriptsize core/kms\_nonsymm},
    \texttt{\scriptsize gallery/grcar},
    \texttt{\scriptsize contest/pagerank},
    \texttt{\scriptsize gallery/parter},
    \texttt{\scriptsize core/hessmaxdet},
    \texttt{\scriptsize gallery/invhess},
    \texttt{\scriptsize contest/mht},
    \texttt{\scriptsize gallery/dorr},
    \texttt{\scriptsize gallery/kahan}},xtick={1,...,29}, x tick label style={rotate=45,anchor=east}]

\addplot+[only marks] table [x ={ind},y ={time_HR}] {figs/dat/test_anymatrix_invsqrt2.dat};\addlegendentry{\small \texttt{HR}}
\addplot+[only marks] table [x ={ind},y ={time_AA}] {figs/dat/test_anymatrix_invsqrt2.dat};\addlegendentry{\small \texttt{AA}}
\addplot+[only marks] table [x ={ind},y ={time_Q32}] {figs/dat/test_anymatrix_invsqrt2.dat};\addlegendentry{\small \texttt{Q}, $m = 32$}
\addplot+[only marks] table [x ={ind},y ={time_Q96}] {figs/dat/test_anymatrix_invsqrt2.dat};\addlegendentry{\small \texttt{Q}, $m = 96$}

\end{axis}
\end{tikzpicture}
\caption{\added{Accuracy and run time comparison for different, non-symmetric matrices from the \texttt{anymatrix} collection. We approximate $L_{z^{-1/2}}(A, E_1, E_2)$, where $E_1, E_2$ are outer products of randomly chosen canonical unit vectors and $n = 300$. Details on the test matrices are reported in~\ref{sec:appendix_testset}}}.\label{fig:experiment_anymatrix2}
\end{figure}
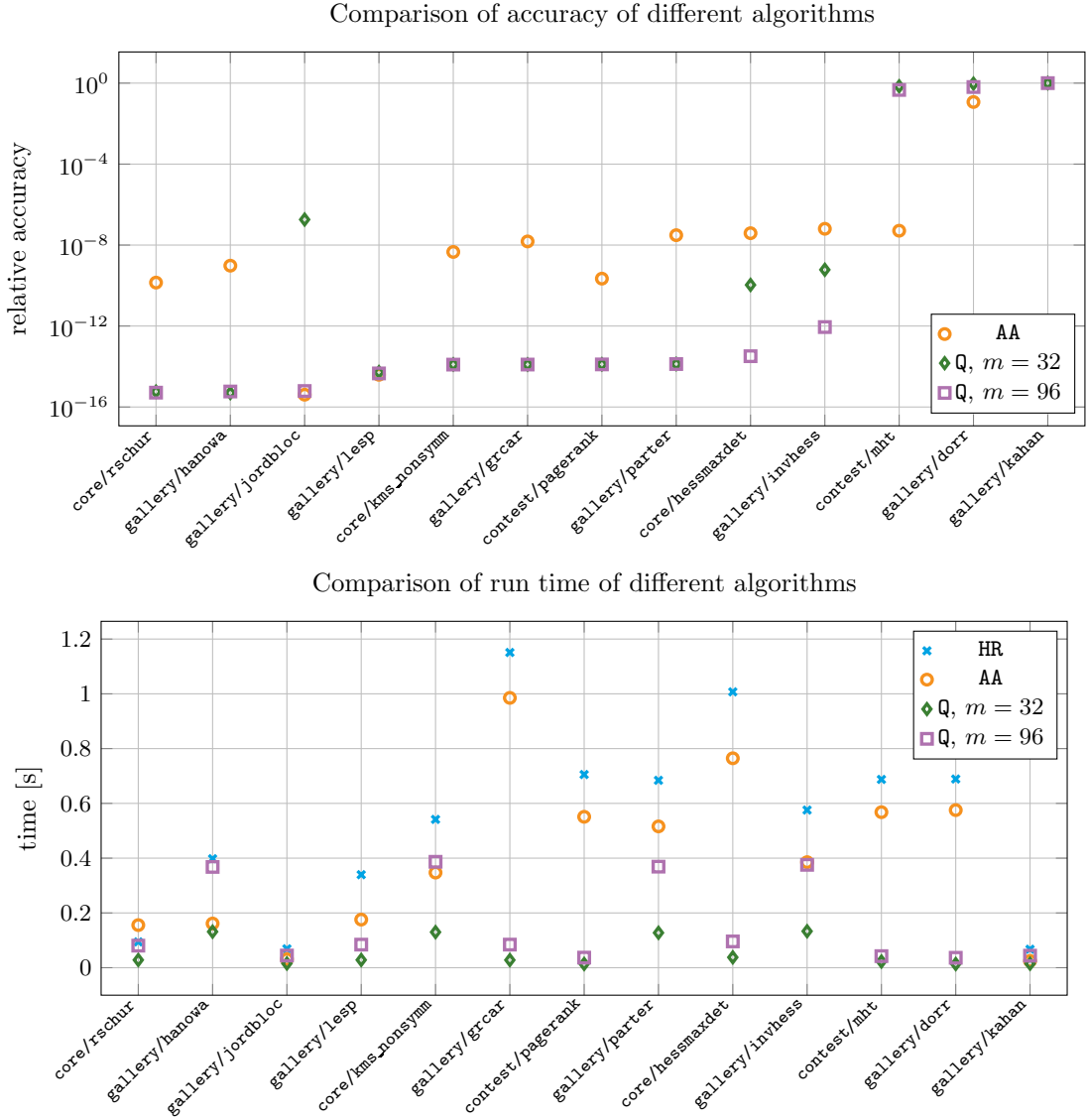

\added{As the most important practical application of higher-order Fr\'echet derivatives is bounding the level-2 condition number, we now turn to the specifics of this application in the second part of this experiment. Bounding the level-2 condition number via~\eqref{eq:lvl2_kronecker} requires computing the Kronecker form of the second-order Fr\'echet derivative, which is possible by~\cite[Algorithm~4.2]{HighamRelton2014}. This algorithm requires repeatedly evaluating $L_f^{(2)}(A, E_1, E_2)$ with $E_1$ and $E_2$ chosen as outer products of all possible combinations of canonical unit vectors. Thus, for simulating one substep of this computation, for the following experiment, we set $k = 2$ and choose $E_i = \ve_{\alpha_i}\ve_{\beta_i}^T$ for random $\alpha_i,\beta_i \in \{1,\dots,n\}$. As Theorem~\ref{the:lvl2_stieltjes_bound} gives an explicit formula for the level-2 condition number of the inverse square root of Hermitian positive definite matrices (and it is therefore not necessary to use the bound~\eqref{eq:lvl2_kronecker} based on the Kronecker form), we only use the non-Hermitian matrices from our test set in this second part of the experiment.}

\added{Apart from the adaptions mentioned above, we use the same experimental setup as before and report the same performance indicators. Results are depicted in Figure~\ref{fig:experiment_anymatrix2}. The results are quite similar to before in the sense that we again observe that \texttt{HR} has the highest run time and that the quadrature algorithm is (almost) always faster than \texttt{AA}, while mostly reaching a comparable (or better) accuracy. What has significantly changed in comparison to the first part of the experiment is the margin by which our new algorithm outperforms \texttt{AA}. While the run time of the different algorithms was very similar for most matrices in the test set before, there is a lot more variety now, especially for the quadrature algorithms. This is due to the fact that the run time of the method largely depends on how efficiently linear systems with $A$ can be solved; see also the discussion in Section~\ref{subsec:efficient_implementation}. For the test matrices considered in this experiment, algorithm~\texttt{Q} with $m=32$ needs between a factor of $2$ and $40$ less computation time than \texttt{AA}. Let us also note that for condition number estimation, one does typically not require a high relative accuracy (in particular as the computed quantity is only a bound for the actual level-2 condition number anyway), so that accuracy to one or two significant digits is often enough and one could also use significantly fewer quadrature nodes, further speeding up the computation.}
\end{experiment}

\section{Conclusions}\label{sec:concluding_remarks}
We have derived new integral representations for the higher-order \Fd{}s of several classes of matrix functions. This integral representation allowed to find an exact formula for the level-2 condition number of $f(A)$ for Hermitian $A$ and a large class of functions $f$ with strictly monotonic derivative. Further, we used the integral representation to devise a computational method for approximating the higher-order \Fd\ based on numerical quadrature. In numerical experiments, we have demonstrated that for moderate derivative orders $k$, our algorithm improves upon the algorithms available in the literature so far. In particular for the important special case of rank-one direction terms---which, e.g., occurs when computing the Kronecker form of the higher-order \Fd{}---we obtain significant savings. 

There are several directions for future research. On the one hand, the generalization of Krylov subspace methods for the first-order \Fd\ with low-rank direction term~\cite{KandolfKoskelaReltonSchweitzer2021, Kressner2019} to the higher-order case would be interesting to further reduce computational complexity in this setting. On the other hand, it is worth investigating whether the proposed integral representations of the second-order \Fd\ also allow to find formulas or at least bounds for level-2 condition numbers of other classes of functions and/or matrices then what we have considered in the present work.

\section*{Acknowledgment}
The author wishes to thank Bahar Arslan for fruitful discussions on the topic and for her careful reading of an earlier version of this manuscript \added{as well as two anonymous referees for their helpful suggestions which improved the manuscript}.

\bibliography{matrixfunctions}

\appendix

\section{Quadrature rules} \label{sec:appendix_quadrature}
In this section, we brief\/ly discuss important matrix function quadrature rules and how they are applicable in our setting. We also report the results of small numerical experiments that confirm that one can expect about the same accuracy for approximating $L_f(A,E_1,\dots,E_k)$ as for approximating $f(A)\vb$.

\paragraph{Exponential function on the negative real axis}
When approximating the exponential of a matrix $A$ with eigenvalues on (or close to) the nonpositive real axis $(-\infty, 0]$, a popular choice is to use the trapezoidal (or midpoint) rule on a parabolic, hyperbolic or cotangent contour~\cite{Weideman2006,WeidemanTrefethen2007,TrefethenWeidemanSchmelzer2006}. 
\begin{figure}
\centering
\tikzsetnextfilename{experiment_accuracy}
\pgfplotsset{height=0.4\linewidth,width=0.875\linewidth,compat=1.10,every axis/.append style={legend style={/tikz/every even column/.append style={column sep=6pt}}}}
\pgfplotsset{every tick label/.append style={font=\small}}

\noindent%
\begin{tikzpicture}[scale=1]%
    \begin{semilogyaxis}[legend style={at={(0.36,0.33)}}, 
   	anchor=north east, legend columns=1,cycle list name=list_std3, 
   	xmin=2, xmax=38,grid=major, 
   	xlabel={\small $m$}, ylabel={\small accuracy}, 
   	title={Accuracy of quadrature rules from~\cite{TrefethenWeidemanSchmelzer2006} for $L_{\exp}(A,\cdot,\dots,\cdot)$}]

\addplot+[] table [x ={Q},y ={M1}] {figs/dat/experiment_accuracy_exp.dat};\addlegendentry{\small  Parabolic contour~\eqref{eq:parabolic_contour}}
\addplot+[] table [x ={Q},y ={M2}] {figs/dat/experiment_accuracy_exp.dat};\addlegendentry{\small  Hyperbolic contour~\eqref{eq:hyperbolic_contour}}
\addplot+[] table [x ={Q},y ={M3}] {figs/dat/experiment_accuracy_exp.dat};\addlegendentry{\small  Cotangent contour~\eqref{eq:cotangent_contour}}

\addplot+[loosely dashed] table [x ={Q},y ={f1}] {figs/dat/experiment_accuracy_exp.dat};
\addplot+[loosely dashed] table [x ={Q},y ={f2}] {figs/dat/experiment_accuracy_exp.dat};
\addplot+[loosely dashed] table [x ={Q},y ={f3}] {figs/dat/experiment_accuracy_exp.dat};

\end{semilogyaxis}
\end{tikzpicture}
\cprotect\caption{Relative error of approximation~\eqref{eq:frechet_quadrature} for $L_{\exp}(A,\cdot,\dots,\cdot)$ for varying number $m$ of quadrature nodes and the three quadrature rules~\eqref{eq:parabolic_contour},~\eqref{eq:hyperbolic_contour}, and~\eqref{eq:cotangent_contour}. The derivative order is chosen as $k=4$, the matrix $A \in \R^{25 \times 25}$ is generated by the Matlab command \verb|A = gallery('lesp',25)| and the matrices $E_1, \dots, E_4$ have normally distributed random entries. For comparison purposes, the dashed lines show the relative error norms for approximating $\exp(A)\vb$ by the same quadrature rules, where $\vb \in \R^{25}$ is a vector with normally distributed random entries.}\label{fig:experiment_accuracy}
\end{figure}

Using an optimized parabolic contour leads to the nodes and weights
\begin{eqnarray}
\zeta_j &=& m\cdot(0.1309-0.1194\theta_j^2+0.25i\theta_j), \nonumber\\
w_j &=& -\exp(\zeta_j)\cdot(0.2388i\theta_j+0.25),\label{eq:parabolic_contour}
\end{eqnarray}
where $\theta_j, j = 1,\dots,m$ are equispaced points in $[-\pi,\pi]$. This quadrature rule achieves a convergence rate of $\mathcal{O}(2.85^{-m})$. Similarly, a quadrature rule on an optimized hyperbolic contour that achieves a convergence rate of $\mathcal{O}(3.2^{-m})$ is given by
\begin{eqnarray}
\zeta_j &=& 2.246m\cdot(1-\sin(1.1721-0.3443i\theta_j), \nonumber\\
w_j &=& -0.7733\exp(\zeta_j)\cos(1.1721-0.3443i\theta_j),\label{eq:hyperbolic_contour}
\end{eqnarray}
and a rule on a cotangent contour that yields a convergence rate of $\mathcal{O}(3.89^{-m})$ is given by
\begin{eqnarray}
\zeta_j &=& m\cdot(0.5017\theta_j\cot(0.6407\theta_j)-0.6122+0.2645i\theta_j), \nonumber\\
w_j &=& \exp(\zeta_j)\left(0.5017i\cot(0.6407\theta_j)-0.3214i\theta_j\csc(0.6407\theta_j)^2-0.2645\right).\label{eq:cotangent_contour}
\end{eqnarray}
When $A$ is real, by symmetry it is sufficient to evaluate only half of the $m$ quadrature points for all of the above quadrature rules~\cite{TrefethenWeidemanSchmelzer2006}. 

We investigate the accuracy of the approximations resulting from these three choices of quadrature rules in a small numerical experiment, where we use the same matrices as in Experiment~\ref{ex:full}: We generate $A \in \R^{25 \times 25}$ by the MATLAB command \verb|A = gallery('lesp',25)|, which results in a tridiagonal matrix with smoothly distributed eigenvalues in the interval $[-53.5, -4.5]$, and the matrices $E_1,\dots,E_4$ are chosen to be dense matrices with normally distributed random entries. Figure~\ref{fig:experiment_accuracy} depicts the relative error norm for the approximation~\eqref{eq:frechet_quadrature} when using the nodes and weights~\eqref{eq:parabolic_contour},~\eqref{eq:hyperbolic_contour} or~\eqref{eq:cotangent_contour}. The reference solution is computed using~\cite[Algorithm 3.6]{HighamRelton2014} employing the built-in MATLAB function \texttt{expm}. 
We observe that both the hyperbolic contour~\eqref{eq:hyperbolic_contour} and the cotangent contour~\eqref{eq:cotangent_contour} show signs of instability starting at around $m = 16$ quadrature nodes and do not manage to reach high accuracy. Using the parabolic contour, an approximation error close to the order of machine precision can be reached. To investigate whether the observed instabilities are an inherent drawback of the proposed integral representation of $L_{\exp}(A,\cdot,\dots,\cdot)$, we also report the relative errors reached by the same quadrature rules when approximating $\exp(A)\vb$, where $\vb \in \R^{25}$ is a vector with normally distributed random entries. These are depicted by dashed lines. One can clearly observe that the behavior is strikingly similar, i.e., the instabilities seem to be inherent to the quadrature rule and not related to the specific nature of the integral~\eqref{eq:integral_representation} we want to approximate. Similar observations concerning the accuracy of the rules~\eqref{eq:parabolic_contour}--\eqref{eq:cotangent_contour} for increasing number of quadrature nodes have been reported in~\cite[Section~4.3]{FrommerGuettelSchweitzer2014a}. In summary, this example illustrates that we can expect a similar performance as in quadrature rules for $\exp(A)\vb$, which also means that sometimes the obtainable accuracy might be limited if a suboptimal choice of quadrature rule is made.

\paragraph{Square root of matrices with positive eigenvalues}
\begin{figure}
\centering
\tikzsetnextfilename{experiment_accuracy_sqrt}
\pgfplotsset{height=0.4\linewidth,width=0.875\linewidth,compat=1.10,every axis/.append style={legend style={/tikz/every even column/.append style={column sep=6pt}}}}
\pgfplotsset{every tick label/.append style={font=\small}}

\noindent%
\begin{tikzpicture}[scale=1]%
    \begin{semilogyaxis}[legend style={at={(0.995,0.98)}}, 
   	anchor=north east, legend columns=1,cycle list name=list_std3, 
   	xmin=2, xmax=29,grid=major, 
   	xlabel={\small $m$}, ylabel={\small accuracy}, 
   	title={Accuracy of quadrature rules from~\cite{Hale2008} for $L_{z^{1/2}}(A,\cdot,\dots,\cdot)$}]

\addplot+[] table [x ={Q},y ={M1}] {figs/dat/experiment_accuracy_sqrt.dat};\addlegendentry{\small  Method 1}
\addplot+[] table [x ={Q},y ={M2}] {figs/dat/experiment_accuracy_sqrt.dat};\addlegendentry{\small  Method 2}
\addplot+[] table [x ={Q},y ={M3}] {figs/dat/experiment_accuracy_sqrt.dat};\addlegendentry{\small  Method 3}

\addplot+[loosely dashed] table [x ={Q},y ={f1}] {figs/dat/experiment_accuracy_sqrt.dat};
\addplot+[loosely dashed] table [x ={Q},y ={f2}] {figs/dat/experiment_accuracy_sqrt.dat};
\addplot+[loosely dashed] table [x ={Q},y ={f3}] {figs/dat/experiment_accuracy_sqrt.dat};

\end{semilogyaxis}
\end{tikzpicture}
\cprotect\caption{Relative error of approximation~\eqref{eq:frechet_quadrature} for $L_{z^{1/2}}(A,\cdot,\dots,\cdot)$ for varying number $m$ of quadrature nodes and the three quadrature rules from~\cite{Hale2008}. The derivative order is chosen as $k=4$, the matrix $A \in \R^{25 \times 25}$ is generated by the Matlab command \verb|A = -gallery('lesp',25)| and the matrices $E_1, \dots, E_4$ have normally distributed random entries. For comparison purposes, the dashed lines show the relative error norms for approximating $A^{1/2}\vb$ by the same quadrature rules, where $\vb \in \R^{25}$ is a vector with normally distributed random entries.}\label{fig:experiment_accuracy_sqrt}
\end{figure}
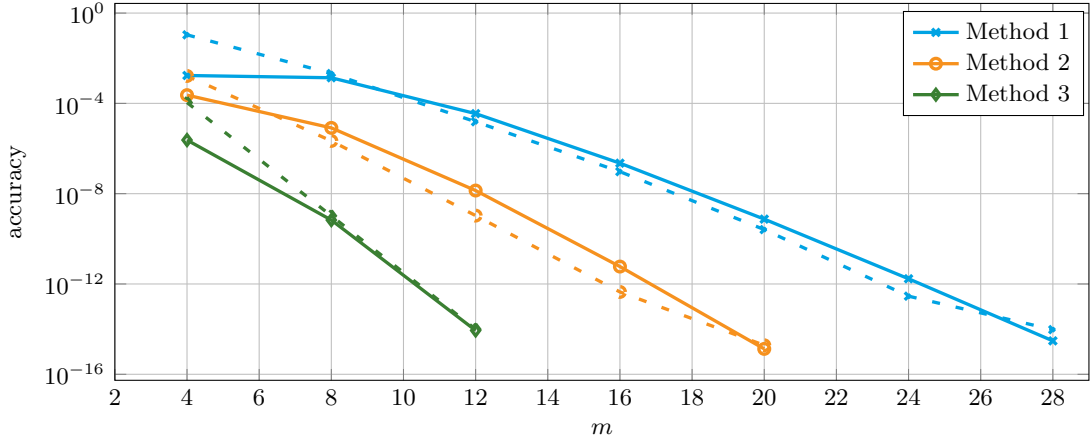
Another matrix function of very high practical importance is the matrix square root $f(A) = A^{1/2}$. In~\cite{Hale2008}, different quadrature approximations for $A^{1/2}$ (among other functions) are discussed, based on conformal mappings involving Jacobi elliptic functions combined with the trapezoidal rule (or the midpoint rule). 
These rules (referred to simply as ``Method 1'', ``Method 2'' and ``Method 3'' in~\cite{Hale2008}) converge with geometric rates of $\mathcal{O}(e^{-\pi^2 m / (\log (\lmax/\lmin)+3)}$), $\mathcal{O}(e^{-2\pi^2 m / (\log (\lmax/\lmin)+6)})$ and $\mathcal{O}(e^{-2\pi^2 m / (\log (\lmax/\lmin)+3)})$, respectively. We do not discuss the rules or their derivations in detail here, but just briefly explain why they are directly applicable in our setting and then illustrate by a small experiment that one can again expect performance to be similar to that of approximating $A^{1/2}$ or $A^{1/2}\vb$. The first step in the derivation of the quadrature rules in~\cite{Hale2008} consists of rewriting the Cauchy integral representation of the square root as
\begin{equation}\label{eq:cauchy_integral_sqrt}
    A^{1/2} = \frac{A}{2\pi i} \int_\Gamma \zeta^{-1/2} (\zeta I-A) \d \zeta.
\end{equation}
By considerations similar to those in Theorem~\ref{the:stieltjes_times_z}, this leads to the representation
\begin{equation}\label{eq:integral_representation_sqrt_hale}
L_f^{(k)}(A, E_1, \dots E_k) = \frac{1}{2\pi i} \int_\Gamma \sum\limits_{\pi \in S_k} \zeta \cdot\zeta^{-1/2} M_\pi(\zeta; A, E_1, \dots E_k) \d \zeta = \frac{1}{2\pi i} \int_\Gamma \sum\limits_{\pi \in S_k} \zeta^{1/2} M_\pi(\zeta; A, E_1, \dots E_k) \d \zeta.
\end{equation}
Thus, the representation of the higher-order \Fd\ is not changed at all by this transformation, so that no special techniques or transformations are necessary to apply the corresponding quadrature rules. We now test the three methods using the same experimental setup as for the exponential in the preceding paragraph, just with $f(z) = z^{1/2}$ instead of $f(z) = \exp(z)$ and replacing $A$ by $-A$ to obtain a positive definite matrix. The results are reported in Figure~\ref{fig:experiment_accuracy_sqrt}. The dashed lines again correspond to the accuracy of the same quadrature rules when applied for approximating $A^{1/2}\vb$, where $\vb$ is a vector with normally distributed random entries. We again observe that the behavior when approximating the higher-order \Fd{} is very similar to that when approximating the action of a matrix function on a vector, as we already noted for the exponential. All three quadrature rules allow to find approximations near the order of machine precision.

\paragraph{Inverse square root}
\begin{figure}
\centering
\tikzsetnextfilename{experiment_accuracy_invsqrt}
\pgfplotsset{height=0.4\linewidth,width=0.875\linewidth,compat=1.10,every axis/.append style={legend style={/tikz/every even column/.append style={column sep=6pt}}}}
\pgfplotsset{every tick label/.append style={font=\small}}

\noindent%
\begin{tikzpicture}[scale=1]%
    \begin{semilogyaxis}[legend style={at={(0.995,0.98)}}, 
   	anchor=north east, legend columns=1,cycle list name=list_std1, 
   	xmin=2, xmax=41,grid=major, 
   	xlabel={\small $m$}, ylabel={\small accuracy}, 
   	title={Accuracy of quadrature rule from~\cite{FrommerGuettelSchweitzer2014a} for $L_{z^{-1/2}}(A,\cdot,\dots,\cdot)$}]

\addplot+[] table [x ={Q},y ={M1}] {figs/dat/experiment_accuracy_invsqrt.dat};\addlegendentry{\small  Gauss--Jacobi quadrature}

\addplot+[loosely dashed] table [x ={Q},y ={f1}] {figs/dat/experiment_accuracy_invsqrt.dat};

\end{semilogyaxis}
\end{tikzpicture}
\cprotect\caption{Relative error of approximation~\eqref{eq:frechet_quadrature} for $L_{z^{-1/2}}(A,\cdot,\dots,\cdot)$ for varying number $m$ of quadrature nodes and the quadrature rule~\eqref{eq:invsqrt_stieltjes_quadrature}. The derivative order is chosen as $k=4$, the matrix $A \in \R^{25 \times 25}$ is generated by the Matlab command \verb|A = -gallery('lesp',25)| and the matrices $E_1, \dots, E_4$ have normally distributed random entries. For comparison purposes, the dashed line shows the relative error norms for approximating $A^{-1/2}\vb$ by the same quadrature rule, where $\vb \in \R^{25}$ is a vector with normally distributed random entries.}\label{fig:experiment_accuracy_invsqrt}
\end{figure}
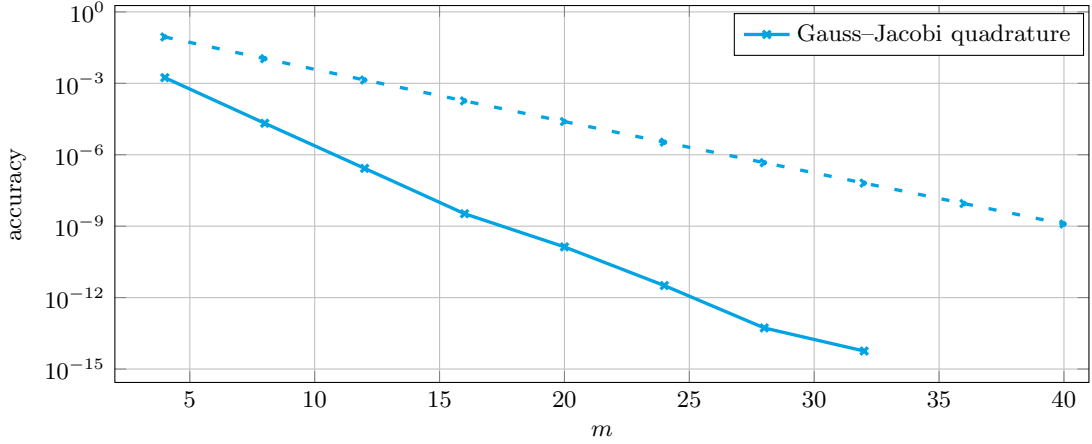
The last example we consider in this section is the inverse square root $f(z) = z^{-1/2}$ . A particular convenient way of approximating it by numerical quadrature arises from its Stieltjes representation
\begin{equation}\label{eq:inverse_square_root_stieltjes}
    z^{-1/2} = \frac{1}{\pi}\int_0^\infty \frac{1}{\sqrt{t}} \cdot \frac{1}{z+t} \d t.
\end{equation}
The big advantage of this representation compared to other integral representations of $z^{-1/2}$ is that the path of integration is always the positive real axis and thus, in the context of approximating $f(A)$ or its \Fd{}, in particular independent of the spectrum of $A$. This way, it is also applicable if spectral information on $A$ is not available or very costly to compute. In~\cite{FrommerGuettelSchweitzer2014a}, it is proposed to apply the Cayley transform $t = \frac{1+x}{1-x}$ to the integral~\eqref{eq:inverse_square_root_stieltjes}, which yields
\begin{equation}\label{eq:inverse_square_root_stieltjes_transformed}
    f(z) = \frac{2}{\pi}\int_{-1}^1 (1-x)^{-1/2}(1+x)^{-1/2} \cdot \frac{1}{(1+x)+z(1-x)} \d t.
\end{equation}
The integral~\eqref{eq:inverse_square_root_stieltjes_transformed} over the finite interval $[-1, 1]$ can be approximated efficiently using Gauss--Jacobi quadrature which allows to resolve the weight function $(1-x)^{-1/2}(1+x)^{-1/2}$ exactly. In our setting, utilizing this approach leads to a representation
\begin{eqnarray}
L_f^{(k)}(A, E_1, \dots E_k) &=& \frac{2}{\pi} \int_{-1}^1 \sum\limits_{\pi \in S_k} (1-x)^{-1/2}(1+x)^{-1/2} \frac{1}{1-x} \cdot M_\pi\left(\frac{1-x}{1+x}; A, E_1,\dots,E_k\right) \d t \nonumber\\
&\approx& \frac{2}{\pi} \sum_{i = 1}^m \sum\limits_{\pi \in S_k} \frac{\omega_i}{1-x_i} \cdot M_\pi\left(\frac{1-x_i}{1+x_i}; A, E_1,\dots,E_k\right) \d t, \label{eq:invsqrt_stieltjes_quadrature}
\end{eqnarray}
where $x_i$ and $\omega_i$ denote the nodes and weights of an $m$-point Gauss--Jacobi quadrature rule with $\alpha=\beta=-\frac12$.

To illustrate the performance of this quadrature rule, we again consider the exact same experimental setup as in the previous example, just changing the function $f$. The results of this experiment are presented in Figure~\ref{fig:experiment_accuracy_invsqrt}. The Gauss--Jacobi quadrature rule~\eqref{eq:invsqrt_stieltjes_quadrature} rule produces an approximation with error near the order of machine precision for $m = 32$ quadrature nodes. Interestingly, in this experiment, the approximation for the higher-order \Fd{} actually converges faster than that for $A^{-1/2}\vb$.

\section{Test matrices from \texttt{anymatrix} collection}\label{sec:appendix_testset}
In this section, we give details on the set of test matrices used in Experiment~\ref{ex:anymatrix}. The data set was assembled as follows: 
\begin{enumerate}
    \item We selected all matrices with properties \texttt{square} and \replaced{\texttt{not rank-deficient}}{\texttt{non-singular}} from the \texttt{anymatrix} collection~\cite{Anymatrix}.
    \item We checked which of these matrices have eigenvalues on the negative real axis. For each such matrix $A$ we either \dots
    \begin{itemize}
        \item \dots replaced it with $-A$ in our test set if it had no eigenvalues on the positive real axis,
        \item \dots or removed it from the test set otherwise.
    \end{itemize}
    \item For all remaining matrices, we computed and compared approximations with algorithm \texttt{HR} in double and quadruple precision, using the small value $n = 25$ (or a value as close as possible to $25$ if the matrix was not freely scalable) due to the very high run time of the algorithm in quadruple precision. We then removed all matrices for which \texttt{HR} in double precision failed to produce an accurate enough result. This typically happened for matrices which were ``numerically singular'', i.e., whose smallest eigenvalue was near the order of unit roundoff.
\end{enumerate} 

Table~\ref{tab:anymatrix} contains an overview of the resulting test set, together with the relative accuracy that the different algorithms reached in Experiment~\ref{ex:anymatrix}. For matrices which required additional parameters in their construction, we chose the default parameters unless explicitly noted otherwise in the table. For the \texttt{contest} matrices which are based upon an input adjacency matrix, we used the tridiagonal adjacency matrix of a one-dimensional chain as input. \added{In the second part of the experiment, only the nonsymmetric matrices from the test set were used. Those are marked by an asterisk in Table~\ref{tab:anymatrix}.}

\begin{table}
\small
    \centering
    \begin{tabular}{l|r|r|r|r|p{5.75cm}} 
		& & \multicolumn{3}{c|}{\textbf{Relative accuracy}} &  \\ 
		\textbf{Matrix name} & \textbf{size $n$} & \textbf{\texttt{AA}} & \textbf{\texttt{Q}, $m=32$} & \textbf{\texttt{Q}, $m=96$} & \textbf{Properties \& Comments}\\ 
		\hline\hline
\texttt{gallery/wathen}     & $8$   & 6.5124e-08 &	0.0044168    &	2.2777e-15 & symmetric, constructed with parameters \verb|anymatrix('gallery/wathen', 1, 1);|\\ \hline
\multirow{2}{*}{\texttt{gallery/jordbloc}\added{${}^{\ast}$}}   & \multirow{2}{*}{$300$} & 7.0035e-10 &	3.3604e-15   &	3.4467e-15 & \multirow{2}{*}{non-normal}\\
& & \added{4.0570e-16${}^\ast$} & \added{1.8227e-07${}^\ast$} & \added{6.1653e-16${}^\ast$} &\\ \hline
\texttt{gallery/kms}        & $300$ & 3.7323e-10 &	2.323e-10    &	8.8975e-15  & symmetric \\ \hline
\multirow{2}{*}{\texttt{gallery/parter}\added{${}^{\ast}$}} & \multirow{2}{*}{$300$} & 2.3496e-09 &	1.0747e-14   &	1.0773e-14 & \multirow{2}{*}{non-normal}\\
& & \added{3.0772e-08${}^\ast$} & \added{1.3291e-14${}^\ast$} & \added{1.3223e-14${}^\ast$} &\\ \hline
\multirow{2}{*}{\texttt{contest/pagerank}\added{${}^{\ast}$}}  & \multirow{2}{*}{$300$} & 6.1561e-10 &	1.506e-14    &	1.5048e-14 & \multirow{2}{*}{\shortstack[l]{non-normal, constructed based on \\ 1D chain}}\\
& & \added{2.1954e-10${}^\ast$} & \added{1.2827e-14${}^\ast$} & \added{1.2712e-14${}^\ast$} &\\ \hline
\multirow{2}{*}{\texttt{core/rschur}\added{${}^{\ast}$}} & \multirow{2}{*}{$300$} & 4.2621e-09 &	1.5723e-14   &	1.5514e-14 & \multirow{2}{*}{non-normal} \\ 
& & \added{1.3995e-10${}^\ast$} & \added{5.7695e-16${}^\ast$} & \added{5.1198e-16${}^\ast$} &\\ \hline
\multirow{2}{*}{\texttt{core/kms\_nonsymm}\added{${}^{\ast}$}}  & \multirow{2}{*}{$300$} & 7.3768e-08 &	7.8759e-11   &	2.2121e-14 & \multirow{2}{*}{non-normal} \\
& & \added{4.5802e-09${}^\ast$} & \added{1.2669e-14${}^\ast$} & \added{1.2427e-14${}^\ast$} &\\ \hline
\texttt{gallery/condex}     & $300$ & 1.3934e-09 &	2.2884e-14   &	2.2759e-14 & symmetric \\ \hline
\multirow{2}{*}{\texttt{gallery/grcar}\added{${}^{\ast}$}}      & \multirow{2}{*}{$300$} & 5.3571e-10 &	3.4309e-14	 &  3.4346e-14 & \multirow{2}{*}{non-normal} \\
& & \added{1.5121e-08${}^\ast$} & \added{1.2546e-14${}^\ast$} & \added{1.2503e-14${}^\ast$} &\\ \hline
\texttt{core/hessmaxdet}\added{${}^{\ast}$}    & $300$ & 1.1233e-09 &	1.0742e-12	 &  4.4855e-14 & non-normal \\ 
& & \added{3.8555e-08${}^\ast$} & \added{1.0728e-10${}^\ast$} & \added{3.2312e-14${}^\ast$} &\\ \hline
\texttt{gallery/pei}        & $300$ & 7.0728e-10 &	4.8682e-14	 &  4.8769e-14 & symmetric \\ \hline
\texttt{gallery/poisson}    & $289$ & 9.5543e-10 &	6.2609e-11	 &  9.2545e-14 & symmetric \\ \hline
\texttt{core/wilson}        & $4$   & 4.7603e-09 &	0.0027599	 &  1.6243e-13 & symmetric \\ \hline
\multirow{2}{*}{\texttt{gallery/hanowa}\added{${}^{\ast}$}}     & \multirow{2}{*}{$300$} & 1.3245e-08 &	2.0481e-07   &  2.9535e-13  & \multirow{2}{*}{non-symmetric, normal}\\
& & \added{9.5941e-10${}^\ast$} & \added{4.9204e-16${}^\ast$} & \added{5.8360e-16${}^\ast$} &\\ \hline
\multirow{2}{*}{\texttt{gallery/invhess}\added{${}^{\ast}$}}    & \multirow{2}{*}{$300$} & 7.4889e-09 &	2.3603e-09	 &  8.0906e-13 & \multirow{2}{*}{non-normal}\\
& & \added{6.3634e-08${}^\ast$} & \added{6.0317e-10${}^\ast$} & \added{8.8928e-13${}^\ast$} &\\ \hline
\texttt{gallery/gcdmat}     & $300$ & 1.0726e-09 &	1.4861e-12	 &  1.6136e-12  & symmetric \\ \hline
\texttt{gallery/minij}      & $300$ & 3.8326e-09 &	7.0734e-11	 &  7.0734e-11 & symmetric \\ \hline
\multirow{2}{*}{\texttt{gallery/lesp}\added{${}^{\ast}$}}  & \multirow{2}{*}{$300$} & 2.1202e-07 &	7.2035e-07	 &  9.57e-11 & \multirow{2}{*}{non-normal} \\
& & \added{3.9038e-15${}^\ast$} & \added{5.2881e-15${}^\ast$} & \added{4.5620e-15${}^\ast$} &\\ \hline
\texttt{core/symmstoch}     & $300$ & 2.0727e-07 &	0.049153	 &  6.0204e-07  & symmetric \\ \hline
\texttt{gallery/lehmer}     & $300$ & 2.9657e-05 &	0.21587	     &  4.2459e-05 & symmetric \\ \hline
\texttt{gallery/randcorr}   & $300$ & 2.3464e-06 &	0.44885	     &  0.0014116  & symmetric \\ \hline
\texttt{gallery/tridiag}    & $300$ & 0.00024749 &	0.98876	     &  0.3893	 & symmetric \\ \hline
\multirow{2}{*}{\texttt{gallery/dorr}\added{${}^{\ast}$}}       & $300$ & 0.017097   &    1   	     &  0.98861 & \multirow{2}{*}{non-normal} \\
& & \added{1.1712e-01${}^\ast$} & \added{9.3916e-01${}^\ast$} & \added{6.4148e-01${}^\ast$} &\\ \hline
\multirow{2}{*}{\texttt{contest/mht}\added{${}^{\ast}$}}        & \multirow{2}{*}{$300$} & 0.01854	   &    1	         &  0.99858		 & \multirow{2}{*}{\shortstack[l]{non-normal, constructed based on \\ 1D chain}} \\
& & \added{5.1144e-08${}^\ast$} & \added{6.9387e-01${}^\ast$} & \added{4.6065e-01${}^\ast$} &\\ \hline
\texttt{regtools/deriv2}    & $300$ & 3.6029	   &    1	         &  1	  & symmetric \\ \hline
\multirow{2}{*}{\texttt{gallery/kahan}\added{${}^{\ast}$}}      & \multirow{2}{*}{$300$} & 1	   &    1	         &  1	 & \multirow{2}{*}{non-normal} \\
& & \added{9.6115e-16${}^\ast$} & \added{1${}^\ast$} & \added{1${}^\ast$} &\\ \hline
\texttt{gallery/wilk}       & $5$   & 0.1559	   &    1	         &  1	 & symmetric
\end{tabular}
\caption{Overview of the test set used in Experiment~\ref{ex:anymatrix}. \added{Matrices whose name is marked with an asterisk are also used in the second part of the experiment. For these matrices, the accuracies reached in the second part of the experiment are also depicted, marked with an asterisk as well.}}
\label{tab:anymatrix}
\end{table}

\end{document}